\documentclass[11pt]
{amsart}
\usepackage{amssymb,amsmath,amsthm,amsfonts,amsopn,url,color,hyperref,enumitem,microtype}
\usepackage[all]{xy}

\theoremstyle{plain}
\newtheorem{thm}{Theorem}[section]
\newtheorem*{mainthm}{Main Results}
\newtheorem{prop}[thm]{Proposition}
\newtheorem{lemma}[thm]{Lemma}
\newtheorem{cor}[thm]{Corollary}

\theoremstyle{definition}
\newtheorem{defn}[thm]{Definition}
\newtheorem*{defn*}{Definition}
\newtheorem*{question*}{Question}

\newtheorem*{example*}{Example}
\newtheorem{rem}[thm]{Remark}
\newtheorem*{rem*}{Remark}
\newcommand{\field}[1]{\mathbb{#1}}
\newcommand{\N}{\field{N}}

\newcommand{\ideal}[1]{\mathfrak{#1}}
\newcommand{\m}{\ideal{m}}
\newcommand{\n}{\ideal{n}}
\newcommand{\p}{\ideal{p}}

\newcommand{\func}[1]{\mathrm{#1} \,}

\newcommand{\Spec}{\func{Spec}}

\DeclareMathOperator{\ann}{ann}

\DeclareMathOperator{\Hom}{Hom}

\newcommand{\be}{\begin{enumerate}}
\newcommand{\ee}{\end{enumerate}}

\newcommand{\li}
 {\leftfootline}

\renewcommand{\phi}{\varphi}

\DeclareMathOperator{\Soc}{Soc}

\DeclareMathOperator{\orc}{c}

\author{Neil Epstein}
\address{Department of Mathematical Sciences \\ George Mason University \\ Fairfax, VA  22030}
\email{nepstei2@gmu.edu}

\author{Jay Shapiro}
\address{Department of Mathematical Sciences \\ George Mason University \\ Fairfax, VA  22030}
\email{jshapiro@gmu.edu}

\title[Gaussian elements]{Gaussian elements of a semicontent algebra}
\subjclass[2010]{13B02, 13A15, 13B25, 13B35, 13F05}
\keywords{Gaussian elements, Ohm-Rush content function, semicontent algebra, polynomial extension, power-series extension, base change, Pr\"ufer domain}

\date{August 24, 2017}
\begin{document}
\begin{abstract}
The connection between a univariate polynomial having locally principal content and the content function acting like a homomorphism (the so-called Gaussian property) has been explored by many authors.  In this work, we extend several such results to the contexts of multivariate polynomials, power series over a Noetherian ring, and base change of affine $K$-algebras by separable algebraically closed field extensions.  We do so by using the framework of the Ohm-Rush content function.   The correspondence is particularly strong in cases where the base ring is approximately Gorenstein or the element of the target ring is regular.
\end{abstract}

\maketitle

\section{Introduction}\label{sec:intro}
Gauss's lemma is fundamental in number theory and algebra. If $R$ is a commutative ring and $f \in R[x]$, the \emph{content} $\orc(f)$ of $f$ is the ideal in $R$ generated by the $R$-coefficients of $f$.  Gauss showed that when $R=\mathbb Z$, we always have \begin{equation}\label{eq:Gauss}
\orc(f)\orc(g) = \orc(fg).
\end{equation}  Along with various avatars, it may be used to explore unique factorization and primitivity of polynomials.  However, in some sense it almost never holds.  In fact \cite{Ts-Gauss}, for an integral domain $R$, (\ref{eq:Gauss}) holds for \emph{all} pairs of polynomials $f,g \in R[X]$ if and only if $R$ is a Pr\"ufer domain (a condition trivially satisfied by $\mathbb Z$).  From this perspective, it makes sense to ask for properties of polynomials that satisfy Gauss's lemma.  One says that $f\in R[X]$ is \emph{Gaussian} if for all $g\in R[X]$, (\ref{eq:Gauss}) holds.  It turns out that even this condition is close to the Pr\"ufer condition.  For a reduced ring $R$ \cite{Lu-Gaussrp}, and also for an approximately Gorenstein Noetherian ring $R$ \cite{HeiHu-Gauss}, $f\in R[X]$ is Gaussian if and only if $c(f)$ is \emph{locally principal}.  Moreover, for any commutative ring $R$ \cite{Lu-Gaussinv}, a regular element $f\in R[X]$ is Gaussian if and only if $c(f)$ is locally principal.

In \cite{OhmRu-content, Ru-content, Nas-zdcontent, Nas-ABconj}, conditions were established and developed on a ring extension such that the notion of the content of an element of the target ring $S$ as an ideal in the base ring $R$ is a useful construct.  As such, the Gaussian property of an element of $S$ with respect to $R$ makes sense, and one would hope to come to similar conclusions in this expanded context.  Examples of ``semicontent algebras'' (defined in \cite{nmeSh-OR} as a generalization of ``content algebras'' \cite{OhmRu-content}) include \begin{itemize}
\item affine semigroup extensions \cite{No-content}, including polynomial extensions in several variables,
\item power series extensions of Noetherian rings \cite{nmeSh-DMpower}, and
\item base change of affine $K$-algebras ($K$ a field) by very well-behaved field extensions $L/K$ \cite[Propositions 3.8 and 3.11]{nmeSh-OR2}.
\end{itemize}
In the first two cases above, the content of an element is the ideal generated by the coefficients of the polynomial-analogue of the extension with respect to the base ring.  In the third case, there's a more interesting answer based on the identity of the target ring as a free module over the base ring.  See the discussion following Proposition~\ref{pr:fieldext}.

The core results of this paper (see Section~\ref{sec:Gausslp}) generalize the results of \cite{HeiHu-Gauss} to the more general context of the Ohm-Rush content function.   However the casual reader who is less well versed in Ohm-Rush content theory may find the corollaries to these theorems (see Section~\ref{sec:apps}), which are stated in a more specific context, to be of primary interest.  In particular, the following is an incomplete representation of what we have proved, being a proper subset of the corollaries of our core results (see Theorems~\ref{thm:appGorMonoid}--\ref{thm:appGorpower} and Corollaries~\ref{cor:fieldextGor}--\ref{cor:fieldextGauss}):

\begin{mainthm}
Let $R$ be a commutative ring, and let $S$ be an $R$-algebra and $f\in S$ as below.  Then under any of the following conditions, $\orc(f)$ is locally principal if and only if Equation~$(\ref{eq:Gauss})$ is satisfied for every $g\in S$: \begin{enumerate}[label=\emph{\Roman*}.]
\item $R$ is Noetherian and approximately Gorenstein, and $S = R[x_1, \ldots, x_n]$ or any other affine semigroup algebra over $R$.
\item $R$ is locally Noetherian, $S = R[x_1, \ldots, x_n]$ or any other affine semigroup algebra over $R$, and $f$ is regular.
\item $R$ is Noetherian and approximately Gorenstein, and $S=R[\![x_1, \ldots, x_n]\!]$.
\item $R$ is a finitely generated artinian Gorenstein $K$-algebra, where $K$ is a field, $L=K(y_1, \ldots, y_t)$ is a purely transcendental field extension, and $S = R \otimes_K L$, where ``content'' is with respect to the field variables $y_j$.
\item $R$ is a finitely generated $K$-algebra, where $K$ is an algebraically closed field, $L/K$ is any field extension, $f$ is regular, and $S=R \otimes_K L$, where ``content'' is with respect to a vector space basis of $L$ over $K$.
\end{enumerate}
\end{mainthm}

The paper is structured as follows: In $\S\ref{sec:OR}$, we recall the framework built up so far about the Ohm-Rush content function.  This allows us a language in which to develop our core results.

In $\S\ref{sec:Gausslp}$, we build Ohm-Rush theory up a bit more, in service of the core theorems of the paper.  What we investigate in this section is the connection between locally principal content and Gaussianness.  One direction (Theorem~\ref{thm:invGauss}) is relatively easy. This first core theorem says that in semicontent algebras, an element of the algebra that has locally principal content will always be Gaussian.  The second of the core theorems (Theorem~\ref{thm:appGor}) gives a partial converse.  It says that if $R$ is approximately Gorenstein, and $S$ is semicontent over $R$, then any Gaussian element of $S$ has locally principal content in $R$.  Then in the third core theorem, Theorem~\ref{thm:localNoeth}, we weaken the hypothesis on $R$ (just locally Noetherian) but strengthen the hypotheses on the $R$-algebra $S$ (requiring it to be free as an $R$-module).  We show that in this case, any \emph{regular} Gaussian element of $S$ has locally principal content ideal.

In $\S\ref{sec:Pr}$, we obtain in Theorem~\ref{thm:Pr} a new characterization of Pr\"ufer domains based on the Gaussian property.  Finally in $\S\ref{sec:apps}$, we apply the built up theorems to draw connections between Gaussian elements and locally principal  content ideals for power-series extensions, multivariate polynomial extensions, and base change through certain kinds of field extension, as in the Main Theorem above.  The casual reader may wish to skip to the final section to see the power of the Ohm-Rush content abstraction.

\section{A refresher on the Ohm-Rush content function}\label{sec:OR}
\begin{defn}
Let $R$ be a ring, $S$ an $R$-algebra, and $M$ an $R$-module.  For $f\in M$, the (Ohm-Rush) \emph{content} $\orc(f)$ of $f$ is defined (\cite{OhmRu-content}; current terminology from \cite{nmeSh-OR}) as the intersection of all ideals $I$ of $R$ such that $f\in IM$.  When we want to specify the ring, we may write $\orc_R$; to specify the ring and the module, we write $\orc_{RM}$. We say that $M$ is an \emph{Ohm-Rush module} if $f\in \orc(f)M$ for all $f\in M$.  If $S$ is Ohm-Rush as an $R$-module, we call it an \emph{Ohm-Rush algebra}.

If an Ohm-Rush $R$-algebra $S$ has the property that $\orc(f)\orc(g) \subseteq \sqrt{\orc(fg)}$ for all $f,g \in S$ (or equivalently, for all $\p \in \Spec R$ with $\p S \neq S$, one has $\p S \in \Spec S$), then we say $S$ is a \emph{weak content algebra} over $R$ \cite{Ru-content}.

If an Ohm-Rush $R$-algebra $S$ is faithfully flat and for any multiplicative subset $W$ of $R$ and any $f,g \in S$ with $\orc(f)_W = R_W$, we have $\orc(fg)_W = \orc(g)_W$, we say $S$ is a \emph{semicontent algebra} over $R$ \cite{nmeSh-OR}.

If an Ohm-Rush $R$-algebra $S$ is faithfully flat and for any $f,g \in S$, there is some positive integer $k$ with $\orc(f)^k \orc(g) = \orc(f)^{k-1}\orc(fg)$, we say $S$ is a \emph{content algebra} over $R$ \cite[\S6]{OhmRu-content}.

If an Ohm-Rush $R$-algebra $S$ has the property that for any $f, g \in S$, we have $\orc(f)\orc(g) = \orc(fg)$, we say the $R$-algebra $S$ is \emph{Gaussian} \cite{Nas-ABconj}.

If $f\in S$ is such that $\orc(fg) = \orc(f)\orc(g)$ for all $g\in S$, we say $f$ is a \emph{Gaussian element} of the algebra.
\end{defn}

Recall also that for algebras $R \rightarrow S$, \[
\text{faithfully flat Gaussian} \implies \text{content} \implies \text{semicontent} \implies \text{weak content}
.
\]

Other facts we will use without further comment (the first two of which follow immediately from the definitions) are collected in the following statement:
\begin{prop}\label{pr:omnibus}
Let $R$ be a ring, $M$ an Ohm-Rush $R$-module, $I$ an ideal of $R$, and $S$ an Ohm-Rush $R$-algebra.
\begin{enumerate}[label=(\alph*)]
\item 
For any $f,g \in M$, we have $\orc(f+g) \subseteq \orc(f)+\orc(g)$.
\item 
For any $f, g\in S$, we have $\orc(fg) \subseteq \orc(f)\orc(g)$.
\item 
\cite[Corollary 1.6]{OhmRu-content} If $M$ is flat over $R$, $a\in R$, and $f\in M$, we have $\orc(af) = a\orc(f)$.
\item 
\label{it:loc} \cite[Theorem 3.1]{OhmRu-content} If $M$ is flat over $R$ and $W$ is a multiplicative subset of $R$, then $M_W$ is an Ohm-Rush $R_W$-module, and for any $f\in M$ and $t\in W$, we have $\orc_{R_W M_W}(f/t) = (\orc_{RM}(f))_W$.
\item\label{it:modout} \cite[Remark 2.3(d)]{OhmRu-content} $M/IM$ is an Ohm-Rush $R/I$-module, and for any $f\in M$, we have $\orc_{R/I, M/IM}(f+IM) = (\orc_{RM}(f)+I)/I$ as ideals of $R/I$
\end{enumerate}
\end{prop}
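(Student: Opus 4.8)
The plan is to derive (a) and (b) directly from the defining inclusion of an Ohm-Rush module, namely $f \in \orc(f)M$ for every $f$, together with the fact that $\orc(f)$ is by definition the \emph{smallest} ideal $I$ with $f \in IM$ (being the intersection of all such $I$). For (c)--(e) I would reduce everything to a single flatness computation and otherwise argue formally from the definition; these three are the cited Ohm-Rush results, so in the end I would point to \cite{OhmRu-content}, but the sketch below shows how they follow. The Ohm-Rush hypothesis on $M$ is in force throughout, and ``flat'' is an extra assumption in (c) and (d).

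For (a): since $M$ is Ohm-Rush, $f \in \orc(f)M$ and $g \in \orc(g)M$, so $f+g \in \orc(f)M + \orc(g)M = (\orc(f)+\orc(g))M$. Thus $\orc(f)+\orc(g)$ is one of the ideals $I$ with $f+g \in IM$, and since $\orc(f+g)$ is the intersection of all such ideals, $\orc(f+g) \subseteq \orc(f)+\orc(g)$. For (b), identically, $f \in \orc(f)S$ and $g \in \orc(g)S$ give $fg \in (\orc(f)S)(\orc(g)S) = (\orc(f)\orc(g))S$, whence $\orc(fg) \subseteq \orc(f)\orc(g)$ by the same minimality of the content.

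The one genuinely non-formal ingredient, which powers (c) and (d), is the flat colon identity: if $M$ is flat, then for any ideal $I$ and any $a \in R$ one has $\{m \in M : am \in IM\} = (I :_R a)M$. I would prove this by tensoring the exact sequence $0 \to (I :_R a) \to R \to R/I$, whose last map is $r \mapsto \overline{ar}$ with kernel $(I :_R a)$, against the flat module $M$, and identifying the kernel of the resulting map $M \to M/IM$, $m \mapsto \overline{am}$, with $(I :_R a)M$. Granting this, (c) splits into two inclusions. Since $M$ is Ohm-Rush, $f \in \orc(f)M$ forces $af \in (a\orc(f))M$, so $\orc(af) \subseteq a\orc(f)$. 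Conversely, for each $I$ with $af \in IM$ the colon identity gives $f \in (I :_R a)M$, hence $\orc(f) \subseteq (I :_R a)$ and $a\orc(f) \subseteq a(I :_R a) \subseteq I$; intersecting over all such $I$ yields $a\orc(f) \subseteq \orc(af)$.

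For (d) I would first note that $t$ becomes a unit in $R_W$, so $\orc_{R_W M_W}(f/t) = \orc_{R_W M_W}(f/1)$ by the unit case of (c). The inclusion $\orc_{R_W M_W}(f/1) \subseteq (\orc_{RM}(f))_W$ is immediate from $f \in \orc_{RM}(f)M$. For the reverse, any ideal $\ib$ of $R_W$ with $f/1 \in \ib M_W$ is the extension of its contraction $\ia$, and $f/1 \in (\ia M)_W$ gives $sf \in \ia M$ for some $s \in W$; applying (c) over $R$ gives $s\,\orc_{RM}(f) = \orc_{RM}(sf) \subseteq \ia$, and localizing (where $s$ is a unit) gives $(\orc_{RM}(f))_W \subseteq \ib$. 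Part (e) needs no flatness: writing an ideal of $R/I$ as $\ib/I$ with $\ib \supseteq I$, one has $\overline f \in (\ib/I)(M/IM)$ iff $f \in \ib M$, so the intersection defining $\orc_{R/I}(\overline f)$ matches $(\orc_{RM}(f)+I)/I$ exactly, while the Ohm-Rush property of both $M_W$ and $M/IM$ is inherited from that of $M$ once the content formulas are known. The main obstacle is thus isolated entirely in the flat colon identity; everything else is bookkeeping with the defining intersection, and in practice I would simply cite \cite[Corollary 1.6, Theorem 3.1, Remark 2.3(d)]{OhmRu-content} for (c)--(e).
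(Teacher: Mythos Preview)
Your proposal is correct and matches the paper's treatment exactly: the paper states that (a) and (b) ``follow immediately from the definitions'' and simply cites \cite{OhmRu-content} for (c)--(e), which is precisely your plan. Your additional sketches of (c)--(e), built on the flat colon identity $\{m\in M: am\in IM\}=(I:_Ra)M$, are sound and go beyond what the paper provides.
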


\section{Gaussian elements and locally principal content}\label{sec:Gausslp}

It is well known that a polynomial with invertible content ideal is Gaussian.  In fact more is true, and we can make the following quite general statement by using the Ohm-Rush context.

\begin{thm}\label{thm:invGauss}
Let $R$ be a commutative ring and let $S$ be a semicontent $R$-algebra.  Let $f\in S$ be an element such that $\orc(f)$ is locally principal.  Then $f$ is Gaussian.
\end{thm}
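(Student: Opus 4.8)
The plan is to establish the nontrivial inclusion $\orc(f)\orc(g)\subseteq\orc(fg)$ for every $g\in S$; the reverse inclusion holds automatically by Proposition~\ref{pr:omnibus}(b), so it suffices to prove equality. Since equality of ideals of $R$ can be tested after localization at each prime $\p\in\Spec R$, and since $S$ is flat (being faithfully flat), Proposition~\ref{pr:omnibus}(d) lets me pass freely between $\orc(-)_\p$ computed in $R$ and the Ohm-Rush content computed in the flat $R_\p$-algebra $S_\p$. Thus I would fix $\p$ and reduce to showing $\orc(fg)_\p=\orc(f)_\p\,\orc(g)_\p$, splitting into cases according to whether $\orc(f)_\p$ is the unit ideal.

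If $\orc(f)_\p=R_\p$, then the defining property of a semicontent algebra, applied with $W=R\setminus\p$, immediately gives $\orc(fg)_\p=\orc(g)_\p$, which equals $R_\p\,\orc(g)_\p=\orc(f)_\p\,\orc(g)_\p$. This is the case in which the semicontent hypothesis does all the work, with nothing else needed.

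In the remaining case $\orc(f)_\p\subseteq\p R_\p$, the locally-principal hypothesis gives $\orc(f)_\p=(b)$ for some $b$ in the maximal ideal $\m$ of $R_\p$. Because $S_\p$ is Ohm-Rush (Proposition~\ref{pr:omnibus}(d)), I can write $f/1=bH$ with $H\in S_\p$, and then Proposition~\ref{pr:omnibus}(c) yields $(b)=\orc(f)_\p=b\,\orc_{R_\p S_\p}(H)$. The key step is to conclude that $\orc_{R_\p S_\p}(H)=R_\p$: the degenerate possibility $b=0$ forces $f/1=0$ and makes both sides vanish, while if $b\neq0$ then the equality $bR_\p=b\,\orc_{R_\p S_\p}(H)$ combined with Nakayama's lemma rules out $\orc_{R_\p S_\p}(H)\subseteq\m$ (otherwise $bR_\p=\m\,(bR_\p)$ would force $b=0$). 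Writing $H=h/s$ with $h\in S$ and $s\notin\p$, this says $\orc(h)_\p=R_\p$, so the semicontent property of $S$ (again with $W=R\setminus\p$) gives $\orc(hg)_\p=\orc(g)_\p$. Unwinding $(fg)/1=b\cdot(hg)/s$ and applying Proposition~\ref{pr:omnibus}(c) and (d) once more then computes $\orc(fg)_\p=b\,\orc(hg)_\p=b\,\orc(g)_\p=\orc(f)_\p\,\orc(g)_\p$, finishing the local verification and hence the proof.

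I expect the main obstacle to be the zerodivisor subtlety in the second case: one cannot simply cancel $b$ from $(b)=b\,\orc_{R_\p S_\p}(H)$, so the reduction to $\orc_{R_\p S_\p}(H)=R_\p$ must proceed through Nakayama's lemma rather than cancellation, with the degenerate case $b=0$ dispatched by hand. A secondary bookkeeping point is to keep the witness $h$ genuinely in $S$ rather than merely in $S_\p$, so that the semicontent axiom---stated for the extension $S$ over $R$---actually applies.
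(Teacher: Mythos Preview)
Your proof is correct and follows essentially the same approach as the paper's: reduce to the local situation, factor $f$ as the principal generator times an element of unit content, and invoke the semicontent property. The only cosmetic differences are that the paper reduces fully to a local ring $(R,\m)$ at the outset (rather than tracking localizations at each prime) and replaces your Nakayama step with the equivalent direct observation that $a=at$ forces $1-t$ to be a zerodivisor, hence a nonunit, hence in $\m$.
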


\begin{proof}
We may assume by Proposition~\ref{pr:omnibus}\ref{it:loc} that $(R,\m)$ is local.  Then there is some $a\in R$ such that $\orc(f)=aR$.  If $a=0$, then by the Ohm-Rush property $f=0$ and clearly the zero element is Gaussian.  Thus, we may assume that $a\neq 0$.  In this case, we have by the Ohm-Rush property that $f \in \orc(f)S = aS$, so $f=af_0$ for some $f_0 \in S$.  By flatness, we have $aR =\orc(f) = \orc(af_0) = a\orc(f_0)$.  Thus, there is some $t\in \orc(f_0)$ with $a=at$, so that $(1-t)a=0$.  It follows that $1-t$ is a zero divisor, hence a nonunit, so that $1-t \in \m$.  Hence $t^{-1}\in R$, so that $1=t^{-1}t \in \orc(f_0)$, whence $\orc(f_0) = R$.  Then for any $g\in S$, the semicontent property gives that $\orc(g) = \orc(f_0 g)$.  Putting it all together, we have \[
\orc(f)\orc(g) = a\orc(g) = a\orc(f_0g) = \orc(af_0g) = \orc(fg),
\]
as was to be shown.
\end{proof}

The goal of the rest of this section is to exhibit general contexts in which the converse to Theorem~\ref{thm:invGauss} holds.  To this end, we first note a fact about ascent of Gorensteinness along well-behaved extensions.
\begin{lemma}\label{lem:gorpreserve}
Let $(R,\m) \rightarrow (S,\n)$ be a flat local homomorphism with $\n=\m S$ and $R$ artinian Gorenstein.  Then $S$ is also artinian Gorenstein.
\end{lemma}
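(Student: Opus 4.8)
The plan is to verify two things in turn: first that $S$ is artinian (so that the Gorenstein condition is even meaningful for $S$), and then that its socle is one-dimensional over its residue field, which for an artinian local ring is the standard criterion for being Gorenstein. Throughout I would exploit that, since $R$ is artinian, its maximal ideal is nilpotent, say $\m^n = 0$; the hypothesis $\n = \m S$ then gives $\n^n = \m^n S = 0$, so $\n$ is nilpotent as well. Note that I do \emph{not} assume $S$ is module-finite over $R$ — it need not be, e.g.\ a residue field extension — so artinianness of $S$ has to be extracted intrinsically from flatness.

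For the first point I would filter $S$ by $S \supseteq \m S \supseteq \m^2 S \supseteq \cdots \supseteq \m^n S = 0$ and identify the successive quotients. Flatness of $S$ over $R$ gives $\m^i S/\m^{i+1}S \cong (\m^i/\m^{i+1}) \otimes_R S$, and since $R$ is Noetherian artinian each $\m^i/\m^{i+1}$ is a finite-dimensional $R/\m$-vector space, so the quotient is isomorphic to a finite direct sum of copies of $(R/\m)\otimes_R S = S/\m S = S/\n$. Each such copy is the simple $S$-module $S/\n$, so every filtration quotient has finite length over $S$; hence $S$ has finite length as a module over itself and is therefore artinian, and in particular Noetherian. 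One even reads off $\len_S S = \len_R R$.

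For the Gorenstein conclusion I would compute the socle $\Soc(S) = (0 :_S \n) = (0 :_S \m)$ directly via flatness. Writing $\m = (a_1, \ldots, a_r)$, the annihilator $(0:_R \m) = \Soc(R)$ is the kernel of the map $R \to R^r$ sending $x \mapsto (a_1 x, \ldots, a_r x)$; tensoring the left-exact sequence $0 \to \Soc(R) \to R \to R^r$ with the flat module $S$ and recognizing the new kernel as $(0:_S \m) = \Soc(S)$ yields $\Soc(S) \cong \Soc(R)\otimes_R S$. Because $R$ is artinian Gorenstein, $\Soc(R)$ is one-dimensional over $R/\m$ and killed by $\m$, so $\Soc(R) \cong R/\m$ as $R$-modules; therefore $\Soc(S) \cong (R/\m)\otimes_R S = S/\n$, i.e.\ the socle of $S$ is one-dimensional over its residue field $S/\n$. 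Since $S$ is artinian local, this forces $S$ to be Gorenstein.

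The step I expect to require the most care is the socle computation: one must compute the annihilator of $\n$ (equivalently of $\m$, since $\n = \m S$), use finite generation of $\m$ so that $(0:_R \m)$ is a genuine kernel that commutes with the flat base change ${-}\otimes_R S$, and measure the resulting dimension over the \emph{correct} residue field $S/\n$ rather than over $R/\m$. As an alternative to this hands-on argument, one could instead invoke the standard ascent theorem for flat local homomorphisms, by which $S$ is Gorenstein as soon as both $R$ and the closed fiber $S/\m S = S/\n$ are Gorenstein — and that fiber is a field, hence trivially Gorenstein.
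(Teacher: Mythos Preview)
Your proof is correct and follows the same high-level architecture as the paper's --- establish that $S$ is artinian, then show $\Soc(S)\cong S/\n$ --- but the mechanics differ in both halves and your version is more self-contained. For artinianness, the paper observes that $\n=\m S$ is finitely generated and invokes Cohen's theorem to get Noetherianness, then notes $\n$ is the nilradical to get dimension zero; you instead filter by powers of $\m$ and use flatness to identify each graded piece with copies of $S/\n$, obtaining finite $S$-length (and the pleasant byproduct $\len_S S=\len_R R$) directly. For the socle, the paper runs a chain of Hom--tensor isomorphisms, appealing to a cited result (Enochs--Jenda) for the step $\Hom_R(R/\m,R)\otimes_R S\cong \Hom_R(R/\m,S)$; your argument replaces all of that with the single observation that $(0:_R\m)$ is the kernel of $R\to R^r$, so flatness immediately gives $(0:_S\m)\cong(0:_R\m)\otimes_R S$. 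Both routes land on $\Soc(S)\cong\Soc(R)\otimes_R S\cong S/\n$, but yours avoids external references and is arguably the cleaner way to see why flatness forces the socle to base-change.
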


\begin{proof}
First note that $S$ is zero-dimensional.  This follows from the fact that every element of $\n=\m S$ is nilpotent, so it must be the nilradical of $S$, and so since it is maximal, it must be the only prime ideal of $S$.  Moreover, $\m S$ is finitely generated (by the images of the generators of $\m$ in $S$), so by Cohen's theorem \cite[Theorem 3.4]{Mats}, $S$ is Noetherian, and hence artinian since it is zero-dimensional.

As for Gorensteinness, it is enough to show that the socle of $S$ is isomorphic to $S/\m S$.  We have \begin{align*}
\Soc S &\cong \Hom_S(S/\m S, S) \cong \Hom_S(S \otimes_R R/\m, S) \\
&\cong \Hom_R(R/\m, \Hom_S(S,S))
 \cong \Hom_R(R/\m, R \otimes_R S) \\
 &\cong \Hom_R(R/\m, R) \otimes_R S \cong (\Soc R) \otimes_R S \cong R/\m \otimes_R S \cong S/\m S.
\end{align*}
Only two of the isomorphisms above need comment.  The fourth to last follows from \cite[Theorem 3.2.14]{EnJe-relhombook}, and the second to last holds because $R$ is artinian Gorenstein.  Hence $S$ is Gorenstein.
\end{proof}

\begin{rem}\label{rem:samegens}
Let $(R,\m) \rightarrow (S,\n)$ be a flat homomorphism of local rings with $\n =\m S$ and $J$ a finitely generated ideal of $R$.  It is well-known that $\mu_R(J) = \mu_S(JS)$, where $\mu$ means minimal number of generators.  To see this, one passes by Nakayama's lemma to lengths of the modules $J/\m J$ and $JS / \n JS = JS / \m JS$ over $R$, $S$ respectively, and then one notes that for any finite-length $R$-module $M$, the length of $M$ over $R$ equals the length of $S \otimes_R M$ over $S$ by induction, flatness, and the fact that $S \otimes_R R/\m \cong S/\n$.  We will use this multiple times in the sequel.
\end{rem}

We now come to the next core theorem of the paper.  It says that in semicontent algebras over an approximately Gorenstein base ring, Gaussian elements have locally principal content.  First we recall the definition of approximate Gorensteinness.

\begin{defn}\label{def:appGor}
A Noetherian local ring $(R,\m)$ is \emph{approximately Gorenstein} if for any positive integer $N$, there is an $\m$-primary ideal $I$ such that $I \subseteq \m^N$ and $R/I$ is a Gorenstein ring.

A ring $R$ is \emph{locally approximately Gorenstein} if $R_\m$ is approximately Gorenstein for every maximal ideal $\m$ of $R$.
\end{defn}

\begin{thm}\label{thm:appGor}
Let $R$ be a locally Noetherian and locally approximately Gorenstein ring.  Let $S$ be a faithfully flat Ohm-Rush $R$-algebra such that every maximal ideal of $R$ extends to a prime ideal of $S$.  Then for any $f\in S$ that is Gaussian over $R$, $\orc(f)$ is locally principal.
\end{thm}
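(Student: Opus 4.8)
The plan is to reduce, in stages, to the case where the base is artinian Gorenstein and then to manufacture an explicit witness to non-Gaussianness out of the socle. First I would pass to the local case: both ``locally principal'' and the Gaussian property survive localization, since content localizes by Proposition~\ref{pr:omnibus}\ref{it:loc} and faithful flatness and the Ohm--Rush property are preserved. So I may assume $(R,\m)$ is Noetherian local and approximately Gorenstein with $\m S$ prime. Suppose toward a contradiction that $I:=\orc(f)$ is not principal, so $\mu_R(I)=n\ge 2$. Using Definition~\ref{def:appGor} together with the Artin--Rees lemma, I would choose an $\m$-primary ideal $J\subseteq\m^N$ with $N$ large enough that $I\cap J\subseteq\m I$ and with $\bar R:=R/J$ Gorenstein (hence artinian). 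Passing to $\bar R$ and $\bar S:=S/JS$, Proposition~\ref{pr:omnibus}\ref{it:modout} gives $\orc_{\bar R}(\bar f)=(I+J)/J$, and the choice of $N$ forces $\mu_{\bar R}\big((I+J)/J\big)=\dim_k I/(\m I+(I\cap J))=n$; base change preserves faithful flatness and the Ohm--Rush property, and $\bar f$ stays Gaussian because $\orc$ passes to the quotient multiplicatively. Thus I am reduced to: $\bar R$ artinian Gorenstein with one-dimensional socle $\Soc\bar R=(\delta)$, $\bar S$ a faithfully flat Ohm--Rush $\bar R$-algebra, $\bar f$ Gaussian, and $\bar I:=\orc(\bar f)$ with $\mu(\bar I)=n\ge 2$.

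The engine of the contradiction is Matlis duality in $\bar R$. Dualizing $0\to\bar I\to\bar R\to\bar R/\bar I\to 0$ identifies the Matlis dual of $\bar I$ with $\bar R/(0:\bar I)$, so $\mu(\bar I)=\dim_k\Soc\big(\bar R/(0:\bar I)\big)\ge 2$. Hence I can pick $x_1,x_2\in\m$ that are $k$-linearly independent modulo $(0:\bar I)$ and satisfy $\m x_i\bar I=0$ but $x_i\bar I\neq 0$; since $\Soc\bar R$ is one-dimensional this forces $x_i\bar I=(\delta)$. By Proposition~\ref{pr:omnibus}\ref{it:loc} (flatness) $\orc(x_i\bar f)=x_i\bar I=(\delta)$, so the Ohm--Rush property lets me write $x_i\bar f=\delta w_i$ for some $w_i\in\bar S$. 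I then set $g:=x_2w_1-x_1w_2$. By commutativity $\bar f g=x_2\bar f\,w_1-x_1\bar f\,w_2=\delta(w_2w_1-w_1w_2)=0$, whence $\orc(\bar f g)=0$ and Gaussianness yields $\bar I\,\orc(g)=0$.

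The hard part is to show this is a genuine contradiction, i.e.\ that $\orc(\bar f)\orc(g)=\bar I\,\orc(g)\neq 0$; this is exactly where the one-dimensionality of the socle (the Gorenstein hypothesis) is indispensable. For $a\in\bar I$ one has $ax_i\in x_i\bar I=(\delta)$, say $ax_i=c_i(a)\delta$ with $c_i\colon\bar I\to k$ a $k$-linear functional, and then the computation $ag=c_2(a)\,\delta w_1-c_1(a)\,\delta w_2=\big(c_2(a)x_1-c_1(a)x_2\big)\bar f$ shows, via flatness, that $ag\neq 0$ precisely when $c_2(a)x_1-c_1(a)x_2\notin(0:\bar I)$. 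The linear independence of $x_1,x_2$ modulo $(0:\bar I)$ means this can fail only when $c_1(a)=c_2(a)=0$, while neither $c_i$ is identically zero because $x_i\notin(0:\bar I)$. Hence some $a\in\bar I$ gives $ag\neq 0$, so $\bar I\,\orc(g)\neq 0$, contradicting $\bar I\,\orc(g)=0$. I expect the only remaining points needing care to be the verification that the chosen $J$ really preserves $\mu$ and the bookkeeping that Gaussianness descends to $\bar R$; the conceptual heart is the nonvanishing just described.
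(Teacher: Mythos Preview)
Your reduction to the artinian Gorenstein local case matches the paper's exactly: localize, apply Artin--Rees to find $J$ with $J\cap I\subseteq\m I$, check that $\mu$ is preserved and that Gaussianness descends to $\bar R=R/J$. The divergence is in how the artinian Gorenstein case itself is dispatched. The paper localizes $S$ at the prime $\m S$ to obtain a local ring $T$, transfers Gorensteinness up to $T$ via Lemma~\ref{lem:gorpreserve}, and then uses the double-annihilator property inside $T$: Gaussianness forces $\ann_T(f)=\ann_T(\orc(f)T)$, whence $fT=\orc(f)T$ is visibly principal, and Remark~\ref{rem:samegens} pulls principality back to $R$. Your argument instead stays in $\bar S$ and manufactures an explicit annihilator $g$ of $\bar f$ out of socle data in $\bar R$, invoking Matlis duality only on the $\bar R$ side to guarantee $\dim_k\Soc(\bar R/(0{:}\bar I))\ge2$. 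The paper's route is shorter and more structural but genuinely consumes the hypothesis that $\m S$ is prime (to form $T$) and the auxiliary ascent Lemma~\ref{lem:gorpreserve}; your construction is more hands-on but, notably, never uses that primality hypothesis --- only flatness and the Ohm--Rush property of $\bar S$ over $\bar R$ --- so it actually establishes a mildly stronger statement. One small slip: where you write ``By Proposition~\ref{pr:omnibus}\ref{it:loc} (flatness) $\orc(x_i\bar f)=x_i\bar I$'', you mean the flatness clause (c), not the localization clause \ref{it:loc}.
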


\begin{proof}
We begin by assuming that $(R,\m)$ is artinian, local, and Gorenstein.  By 
\cite[Theorem 3.16]{nmeSh-OR2}, $T=S_{\m S}$ is a content $R$-algebra.  By Lemma~\ref{lem:gorpreserve}, $T$ is artinian and Gorenstein.  Next, recall \cite[Theorem 6.2]{OhmRu-content} that $\orc_{RS}(f) = \orc_{RT}(f)$.  Letting $\orc = \orc_{RT}$, we claim that $\ann_T(f) \subseteq \ann_T(\orc(f)T)$.  To see this, let $g\in \ann_T(f)$.  Then $fg=0$, so since $f$ is Gaussian, we have $0=\orc(fg) = \orc(f)\orc(g)$, whence $\orc(g) \subseteq \ann_R(\orc(f))$.  But then $g \in \orc(g)T \subseteq \ann_R(\orc(f))T \subseteq \ann_T(\orc(f)T)$.  Hence $\ann_T(f) \subseteq \ann_T(\orc(f)T)$.  On the other hand, $f \in \orc(f)T$, so $\ann_T(\orc(f)T) \subseteq \ann_T(f)$.  It follows that $\ann_T(fT) = \ann_T(\orc(f)T)$.  Since $T$ is artinian Gorenstein, every ideal of $T$ is the annihilator of its annihilator \cite[Exercise 3.2.15]{BH}.  Thus we have \[
fT = \ann_T(\ann_T(fT)) = \ann_T(\ann_T(\orc(f)T)) = \orc(f)T.
\]
In particular, $\orc(f)T$ is a principal ideal of $T$.  But then by Remark~\ref{rem:samegens}, $\orc(f)$ is a principal ideal of $R$.

In the general case, by \cite[Theorem 3.1]{OhmRu-content}, we may immediately assume $(R,\m)$ is Noetherian, local, and approximately Gorenstein, with $\m S \in \Spec S$.  By the Artin-Rees lemma, there is some positive integer $n$ with $\m^n \cap \orc(f) \subseteq \m \orc(f)$.  By approximate Gorensteinness, we may choose an $\m$-primary ideal $I$ with $R/I$ Gorenstein and $I \subseteq \m^n$.  Since $S/IS$ is a faithfully Ohm-Rush algebra over $R/I$, and since the unique prime ideal of $R/I$ extends to a prime of $S/IS$, $S/IS$ is a faithfully flat weak content algebra over $R/I$.  Furthermore, we have $I \cap \orc(f) \subseteq \m^n \cap \orc(f) \subseteq \m\orc(f)$ by the above, which means that (using $\bar{ }$ to denote modding out by $I$) $\mu_R(\orc(f)) = \mu_{R/I}(\bar \orc(\bar f))$ by Proposition~\ref{pr:omnibus}\ref{it:modout}.  Moreover, $f+IS$ is Gaussian over $R/I$, since for any $g+IS \in S/IS$, we have the equalities \begin{align*}
\bar \orc(f+IS) \bar \orc(g+IS) &= \frac{\orc(f)+I}{I}\cdot\frac{\orc(g)+I}{I} = [\orc(f)\orc(g)+I]/I\\
&=[\orc(fg)+I]/I = \bar\orc(fg+IS) = \bar\orc((f+IS)(g+IS)).
\end{align*}
But by the artinian Gorenstein case we showed above, $\bar \orc(f+IS)$ is a principal ideal of $R/I$, whence by the above, $\mu_R(\orc(f))=\mu_{R/I}(\bar \orc(f+IS)) = 1$, which means that $\orc(f)$ is a principal ideal of $R$.
\end{proof}

Note that the above theorem already covers many cases.  For instance \cite[Theorem 1.7, Corollary 2.2, Remark 4.8b, and Proposition 5.6]{Ho-purity}, the following sorts of Noetherian rings $R$ are (locally) approximately Gorenstein: \begin{itemize}
\item $R$ is Cohen-Macaulay of dimension at least two.
\item $R$ is a one-dimensional domain that is a homomorphic image of a Gorenstein ring.
\item $R$ is Gorenstein.
\item $R$ is excellent and reduced.
\item $R$ is analytically unramified.
\item $R$ is complete, local, and reduced.
\item $R=A[x,y]$ or $A[\![x,y]\!]$, where $A$ is any Noetherian ring and $x,y$ are (analytic) indeterminates over $A$.
\end{itemize}

On the other hand, the result is sharp in the artinian case, in the sense that for an artinian non-Gorenstein ring, this result is typically false, as follows:

\begin{prop}\label{pr:nongor}
Let $(R,\m)$ be a local artinian ring that is not Gorenstein, and let $S$ be a semicontent $R$-algebra 
such that every 2-generated ideal of $R$ is the content of some element of $S$.  Then there is some Gaussian element of $S$ with non-principal content.
\end{prop}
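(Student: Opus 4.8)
The plan is to exploit the single feature that separates non-Gorenstein artinian rings from Gorenstein ones: the failure of the socle to be one-dimensional. Recall that an artinian local ring $(R,\m)$ is Gorenstein exactly when $\dim_{R/\m}\Soc R = 1$, and that the socle of a nonzero artinian local ring is always nonzero (it contains the last nonvanishing power of $\m$). Hence the hypothesis that $R$ is not Gorenstein forces $\dim_{R/\m}\Soc R \geq 2$. First I would pick $a,b \in \Soc R$ that are linearly independent over $R/\m$ and set $J = (a,b) = Ra + Rb$. Since $J \subseteq \Soc R$ we have $\m J = 0$, so $\mu_R(J) = \dim_{R/\m}(J/\m J) = \dim_{R/\m} J = 2$; in particular $J$ is not principal. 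By the standing hypothesis that every $2$-generated ideal of $R$ is the content of some element of $S$, there is an $f \in S$ with $\orc(f) = J$. This $f$ is the desired element: it already has non-principal (equivalently, non-locally-principal, as $R$ is local) content, so the whole argument reduces to checking that $f$ is Gaussian.

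To verify that $f$ is Gaussian I would argue by cases on an arbitrary $g \in S$, using that $R$ is local, so $\orc(g)$ is either all of $R$ or contained in $\m$. If $\orc(g) = R$, then applying the semicontent axiom with $W = \{1\}$ and with $g$ in the role of the unit-content factor gives $\orc(fg) = \orc(f) = \orc(f)\,\orc(g)$. If instead $\orc(g) \subseteq \m$, then, because $\m J = 0$, we get $\orc(f)\,\orc(g) = J\,\orc(g) \subseteq J\m = 0$, while the inclusion $\orc(fg) \subseteq \orc(f)\,\orc(g)$ of Proposition~\ref{pr:omnibus}(b) forces $\orc(fg) = 0$ as well; hence both sides vanish. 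In either case $\orc(fg) = \orc(f)\,\orc(g)$, so $f$ is Gaussian, completing the proof.

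I expect the only genuine subtlety to be the very first step, namely extracting from the non-Gorenstein hypothesis a non-principal ideal that is annihilated by $\m$, together with the observation that this socle condition makes the content product collapse to zero precisely when $g$ has non-unit content. Once the socle is known to be at least two-dimensional, the remainder is a short dichotomy in which the semicontent axiom handles the unit-content case and $\m J = 0$ handles the other. This also clarifies the role of the hypothesis on $S$: one needs $S$ rich enough to realize at least the single $2$-generated ideal $(a,b)$ as a content, which is exactly what is assumed. The statement should be read as the sharp converse, in the artinian setting, to Theorem~\ref{thm:appGor}: removing Gorensteinness of the base produces a Gaussian element whose content fails to be locally principal.
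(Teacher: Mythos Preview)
Your proof is correct and follows essentially the same approach as the paper's: both arguments pick two linearly independent socle elements, use the hypothesis on $S$ to realize their span as $\orc(f)$, and then verify Gaussianness by splitting into the cases $\orc(g)=R$ (handled by the semicontent property) and $\orc(g)\subseteq\m$ (where $\m\cdot\Soc R=0$ forces both sides to vanish). Your write-up is slightly more explicit about why the socle is at least two-dimensional and why $(a,b)$ is minimally $2$-generated, but the underlying argument is identical.
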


\begin{proof}
Let $x,y$ be linearly independent elements of the socle of $R$.  Choose $f\in S$ such that $\orc(f) = (x,y)$.  This latter ideal is minimally 2-generated, so it is not principal.  On the other hand, $f$ is Gaussian.  To see this, let $g\in S$.  If $\orc(g) =R$, then by the semicontent property, $\orc(fg) = \orc(f) = \orc(f)\orc(g)$.  If on the other hand $\orc(g) \subseteq \m$, then $\orc(fg) \subseteq \orc(f)\orc(g) \subseteq \m\cdot(x,y) = 0$, so $\orc(fg) = \orc(f)\orc(g)$ in this case as well.
\end{proof}

Next we pass to complete local rings.  For this, we need a lifting result.

\begin{lemma}\label{lem:modout2}
Let $\alpha: R \rightarrow S$ be a flat Ohm-Rush map, and let $\{J_n\}_{n\in \N}$ be a sequence of ideals of $R$ such that $\bigcap_n J_n = 0$.  Then for any $f, g\in S$, $\orc(fg) = \orc(f)\orc(g)$ if and only if $\orc_n(\overline{fg}) = \orc_n(\bar f) \orc_n(\bar g)$ for every $n\in \N$, where $\orc$ is the content map from $\alpha$ and $\orc_n$ is the content map from the induced map $\alpha_n: R/J_n \rightarrow S/J_n S$.
\end{lemma}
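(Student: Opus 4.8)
The plan is to translate both implications into statements purely about the ideals $\orc(fg)+J_n$ and $\orc(f)\orc(g)+J_n$ of $R$, using the behavior of content under passage to a quotient. By Proposition~\ref{pr:omnibus}\ref{it:modout} applied to the module $S$ and the ideal $J_n$, we have $\orc_n(\bar f)=(\orc(f)+J_n)/J_n$, $\orc_n(\bar g)=(\orc(g)+J_n)/J_n$, and $\orc_n(\overline{fg})=(\orc(fg)+J_n)/J_n$, where the content map $\orc_n$ of $\alpha_n$ is exactly $\orc_{R/J_n,\,S/J_nS}$. A one-line computation of the product of two extended ideals then shows $\orc_n(\bar f)\,\orc_n(\bar g)=(\orc(f)\orc(g)+J_n)/J_n$, since $(\orc(f)+J_n)(\orc(g)+J_n)+J_n=\orc(f)\orc(g)+J_n$. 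Hence, for each fixed $n$, the Gaussian equality $\orc_n(\overline{fg})=\orc_n(\bar f)\orc_n(\bar g)$ in $R/J_n$ is equivalent to the equality $\orc(fg)+J_n=\orc(f)\orc(g)+J_n$ of ideals of $R$.

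With this dictionary in hand, the forward implication is immediate: if $\orc(fg)=\orc(f)\orc(g)$, then adding $J_n$ to both sides and reducing modulo $J_n$ gives the required equality for every $n$.

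For the converse, first recall that $\orc(fg)\subseteq\orc(f)\orc(g)$ always holds (Proposition~\ref{pr:omnibus}(b)), so it suffices to prove the reverse inclusion $\orc(f)\orc(g)\subseteq\orc(fg)$. The hypothesis, read through the dictionary above, gives $\orc(f)\orc(g)+J_n=\orc(fg)+J_n$ for every $n$, and in particular $\orc(f)\orc(g)\subseteq\orc(fg)+J_n$ for all $n\in\N$. Intersecting over $n$ yields $\orc(f)\orc(g)\subseteq\bigcap_{n}\bigl(\orc(fg)+J_n\bigr)$, so it remains only to show that this intersection collapses to $\orc(fg)$.

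This last step is where I expect the real work to lie, and where the hypothesis $\bigcap_n J_n=0$ must be used. Since $\orc(fg)\subseteq\orc(f)\orc(g)$, the modular law rewrites the per-$n$ equality as $\orc(f)\orc(g)=\orc(fg)+\bigl(\orc(f)\orc(g)\cap J_n\bigr)$, so that the image of $\orc(f)\orc(g)$ in $R/\orc(fg)$ lies inside the image of $J_n$ for every $n$; equivalently, the ideal $\orc(f)\orc(g)/\orc(fg)$ of $R/\orc(fg)$ is contained in $\bigcap_n\overline{J_n}$, the intersection of the images $\overline{J_n}$ of the $J_n$ in $R/\orc(fg)$. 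The crux is thus to deduce $\bigcap_n\overline{J_n}=0$ from $\bigcap_n J_n=0$, which is delicate because an infinite intersection of ideals need not survive passage to a quotient. In the intended applications, where $R$ is Noetherian local and $J_n=\m^n$, this is exactly Krull's intersection theorem applied to the finitely generated, hence $\m$-adically closed, ideal $\orc(fg)$; I would therefore organize the argument so that precisely this closedness — guaranteed by $\bigcap_n J_n=0$ together with the available finiteness — delivers $\bigcap_n(\orc(fg)+J_n)=\orc(fg)$ and hence $\orc(f)\orc(g)=\orc(fg)$.
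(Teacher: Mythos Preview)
Your dictionary and the forward direction coincide with the paper's argument verbatim.  For the converse, the paper argues by contrapositive rather than directly: assuming $\orc(fg)\subsetneq\orc(f)\orc(g)$, it picks $r\in\orc(f)\orc(g)\setminus\orc(fg)$, observes $r\neq 0$, uses $\bigcap_n J_n=0$ to find an $n$ with $r\notin J_n$, and then asserts ``hence $\orc(fg)+J_n\subsetneq\orc(f)\orc(g)+J_n$.''

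You have put your finger on exactly the right spot.  The paper's ``hence'' is the same step you call the crux, and it is not justified by what precedes it: from $r\notin J_n$ and $r\notin\orc(fg)$ one cannot in general conclude $r\notin\orc(fg)+J_n$.  Equivalently, $\bigcap_n J_n=0$ does not by itself force $\bigcap_n(I+J_n)=I$ for a given (even finitely generated) ideal $I$; take $R=\Z$, $I=(2)$, $J_n=(3^n)$, where $I+J_n=R$ for every $n$.  So neither your argument nor the paper's closes the converse in the generality stated; your phrase ``guaranteed by $\bigcap_n J_n=0$ together with the available finiteness'' is optimistic---no such guarantee exists without an additional hypothesis ensuring that $\orc(fg)$ is closed in the $\{J_n\}$-topology.

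Your diagnosis of how the lemma is actually used is correct, however.  In the only application (Proposition~\ref{pr:gausscomplete}), one has $(R,\m)$ Noetherian local and $J_n=\m^n$, and there the Krull intersection theorem gives $\bigcap_n(\orc(fg)+\m^n)=\orc(fg)$, which is exactly the missing ingredient.  So your write-up is a faithful reconstruction of the paper's intended argument, with the added virtue that you isolate the step that requires more than the stated hypotheses; you should simply not claim it follows from $\bigcap_n J_n=0$ and finite generation alone.
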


\begin{proof}
Each map $\alpha_n$ is itself a flat Ohm-Rush map.  Then if $\orc(fg) = \orc(f)\orc(g)$, we have the following: \begin{align*}
\orc_n(\overline{fg}) &= (\orc(fg) + J_n)/J_n = (\orc(f)\orc(g) + J_n)/J_n \\
&= \frac{\orc(f)+J_n}{J_n} \cdot \frac{\orc(g)+J_n}{J_n} = \orc_n(\bar f) \orc_n(\bar g).
\end{align*}
On the other hand, suppose $\orc(fg) \neq \orc(f)\orc(g)$.  Since the forward containment always holds, we have $\orc(fg) \subsetneq \orc(f)\orc(g)$.  Choose $r\in \orc(f)\orc(g) \setminus \orc(fg)$.  Since $\bigcap_n J_n = 0$, there is some $n$ with $r\notin J_n$, and hence $\orc(fg) + J_n \subsetneq \orc(f)\orc(g)+J_n$.  Hence, \begin{align*}
\orc_n(\overline{fg}) &= (\orc(fg)+J_n) / J_n \neq (\orc(f)\orc(g)+ J_n)/J_n \\
&= \frac{\orc(f)+J_n}{J_n} \cdot \frac{\orc(g)+J_n}{J_n} = \orc_n(\bar f) \orc_n(\bar g). \qedhere
\end{align*}
\end{proof}

\begin{prop}\label{pr:gausscomplete}
Let $(R,\m)$ be a Noetherian local ring, and let $S$ be a
n $R$-algebra that is free as an $R$-module.  Let $f\in S$.  Then $f$ is Gaussian over $R$ if and only if the image $f \otimes 1$ of $f$ in $S \otimes_R \hat R$ is Gaussian over $\hat R$, where $\hat R$ is the $\m$-adic completion of $R$.
%
\end{prop}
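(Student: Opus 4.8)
The plan is to detect Gaussianness at each finite level $R/\m^n \to S/\m^n S$ and to observe that these finite levels are left unchanged by completion. Concretely, I would apply Lemma~\ref{lem:modout2} twice: once to the map $\alpha\colon R\to S$ with the ideals $J_n = \m^n$, and once to the map $\hat\alpha\colon \hat R \to S\otimes_R\hat R$ with the ideals $\hat J_n = \m^n\hat R$. Since $S$ is free over $R$ it is flat and Ohm-Rush, and $S\otimes_R\hat R$ is free over $\hat R$ hence also flat and Ohm-Rush, so both maps are of the type required by the lemma. Both $R$ and $\hat R$ are Noetherian local, so Krull's intersection theorem gives $\bigcap_n \m^n = 0$ and $\bigcap_n \m^n\hat R = 0$, which is the hypothesis $\bigcap_n J_n = 0$ needed to invoke the lemma.

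First I would reformulate each side as a statement about finite levels. Fixing $f$ and letting $g$ range over $S$, Lemma~\ref{lem:modout2} shows that $f$ is Gaussian over $R$ if and only if $\orc_n(\bar f\,\bar g) = \orc_n(\bar f)\,\orc_n(\bar g)$ for every $g\in S$ and every $n$. Because the reduction $S \onto S/\m^n S$ is surjective, and because $\orc_n(\bar f\,\bar g)$ and $\orc_n(\bar f)\,\orc_n(\bar g)$ depend only on the images of $f$ and $g$ in $S/\m^n S$, this is the same as saying that for every $n$ the image of $f$ in $S/\m^n S$ is a Gaussian element over $R/\m^n$. The identical argument applied to $\hat\alpha$ shows that $f\otimes 1$ is Gaussian over $\hat R$ if and only if, for every $n$, the image of $f\otimes 1$ in $(S\otimes_R\hat R)/\m^n(S\otimes_R\hat R)$ is Gaussian over $\hat R/\m^n\hat R$.

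It then remains to match the two towers of finite-level conditions. Here I would use the canonical isomorphism $\hat R/\m^n\hat R \cong R/\m^n$ together with the freeness of $S$: tensoring the standard identity $(S\otimes_R\hat R)\otimes_{\hat R}\hat R/\m^n\hat R \cong S\otimes_R \hat R/\m^n\hat R \cong S\otimes_R R/\m^n = S/\m^n S$ yields a commutative square identifying the reduction map $\hat R/\m^n\hat R \to (S\otimes_R\hat R)/\m^n(S\otimes_R\hat R)$ with the reduction map $R/\m^n \to S/\m^n S$, and under these isomorphisms the image of $f\otimes 1$ corresponds to the image of $f$. Since the Ohm-Rush content function is intrinsic (it is the intersection of the ideals $I$ with the given element lying in $IM$), it is preserved by such compatible isomorphisms, so ``the image of $f$ is Gaussian over $R/\m^n$'' and ``the image of $f\otimes 1$ is Gaussian over $\hat R/\m^n\hat R$'' are literally the same condition for each $n$. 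Combining this with the two reformulations above yields the desired equivalence.

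I expect the main obstacle to be the careful verification that the two reduction maps agree as Ohm-Rush maps — that is, assembling the tensor-product identities into a single commutative diagram that simultaneously identifies the base rings $R/\m^n \cong \hat R/\m^n\hat R$, the algebras $S/\m^n S \cong (S\otimes_R\hat R)/\m^n(S\otimes_R\hat R)$, and the distinguished element — since only after this identification does the intrinsic nature of $\orc$ permit the finite-level Gaussian conditions to be transported back and forth. The freeness hypothesis on $S$ is exactly what makes this step clean, both by guaranteeing that the Ohm-Rush property survives to $S\otimes_R\hat R$ and by rendering the identifications $S\otimes_R \hat R/\m^n\hat R \cong S/\m^n S$ transparent.
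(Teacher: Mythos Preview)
Your proposal is correct and follows essentially the same route as the paper: apply Lemma~\ref{lem:modout2} twice (once to $R\to S$ with $J_n=\m^n$, once to $\hat R\to S\otimes_R\hat R$ with $\hat J_n=\m^n\hat R$), invoke the Krull intersection theorem for both, and then identify the finite-level maps $\alpha_n$ and $\beta_n$ via the canonical isomorphisms $R/\m^n\cong \hat R/\m^n\hat R$ and $S/\m^n S\cong (S\otimes_R\hat R)/\m^n(S\otimes_R\hat R)$. The only cosmetic difference is that the paper carries out the latter identification explicitly using a chosen free basis $\{e_\lambda\}$ rather than abstract tensor-product manipulations, and you spell out the surjectivity argument (that every $\bar g\in S/\m^n S$ lifts to $S$) a bit more carefully than the paper does.
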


\begin{proof}
The hypotheses imply that $S$ is a faithfully flat Ohm-Rush algebra over $R$.  Let $\{e_\lambda\}_{\lambda \in \Lambda}$ be a basis for $S$ as a free $R$-module. Let $\alpha: R \rightarrow S$ be the map that exhibits $S$ as an $R$-algebra, and let $\alpha_n: R/\m^n \rightarrow S/\m^n S$ be the induced maps on quotients for all $n\in \N$.  Let $f\in S$. Then by Lemma~\ref{lem:modout2} and the Krull intersection theorem, $f$ is Gaussian with respect to $\alpha$ if and only if for each $n\in \N$, $\bar f_n := f+\m^n S$ is Gaussian with respect to $\alpha_n$.

On the other hand, note that the map $\beta: \hat R \rightarrow S \otimes_R \hat R$ induced from $\alpha$ is an Ohm-Rush map, since $S \otimes_R \hat{R}$ is a free module over $\hat R$ with a basis indexed by $\Lambda$.  Then by another application of Lemma~\ref{lem:modout2}, $f \otimes 1$ is Gaussian with respect to $\beta$ if and only if $\overline{(f \otimes 1)}_n := (f \otimes 1) + \hat\m^n(S \otimes_R \hat R)$ is Gaussian with respect to the map $\beta_n: \hat R / \hat{\m}^n \rightarrow (S \otimes_R \hat R) / \hat{\m}^n (S \otimes_R \hat R)$ induced from $\beta$ for all $n \in \N$.  

However, $\beta_n$ is canonically identified with $\alpha_n$ via the isomorphism $\hat R / \hat \m^n \cong R/\m^n$ and the following identifications: \begin{align*}
\frac{S \otimes_R \hat R}{\hat m^n (S \otimes_R \hat R)} &\cong S \otimes_R \hat{R} / \hat\m^n = \left(\bigoplus_{\lambda \in \Lambda} R e_\lambda\right) \otimes_R \hat R / \hat\m^n \\
&\cong \bigoplus_{\lambda \in \Lambda} (\hat R / \hat \m^n)e_\lambda \cong \bigoplus_{\lambda \in \Lambda} (R/\m^n) e_\lambda \\
&\cong \left(\bigoplus_{\lambda \in \Lambda} R e_\lambda\right) \otimes_R R/\m^n = S \otimes_R R/\m^n \cong S/\m^n S.
\end{align*}
Summarizing, we have for any $f\in S$:

 \begin{align*}
 f \text{ is Gaussian w.r.t. }\alpha &\iff \bar f_n \text{ is Gaussian w.r.t. } \alpha_n \text{ for all } n\\
 &\iff \overline{(f\otimes 1)}_n \text{ is Gaussian w.r.t. } \beta_n \text{ for all }n\\
 &\iff f \otimes 1 \text{ is Gaussian w.r.t. } \beta. \qedhere
%
 \end{align*}
 \end{proof}
 
 Next we note an analogue of McCoy's theorem (see \cite[Theorem 2]{Mcc-divzero}).
 \begin{prop}\label{pr:McCoy}
Let $R$ be a ring and $S$ a semicontent $R$-algebra.  Let $f\in S$.  If $\orc(f)$ contains a regular element of $R$, then $f$ is a regular element of $S$.  The converse holds if $R$ is Noetherian.
 \end{prop}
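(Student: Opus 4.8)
The plan is to treat the two implications separately, in both cases passing to a suitable localization and exploiting the fact that a faithfully flat map transports regularity between $R$ and $S$. For the forward implication the engine is the defining property of a semicontent algebra, applied to the multiplicative set generated by a regular element of $\orc(f)$; for the converse the engine is the decomposition of the set of zero divisors of a Noetherian ring as a \emph{finite} union of associated primes, together with prime avoidance.

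For the forward direction, suppose $a\in\orc(f)$ is regular in $R$. First I would record that $a$ is then regular in $S$ as well: the map $R\arrow{a}R$ is injective, and tensoring with the flat algebra $S$ preserves injectivity, so $S\into S_a$. Now set $W=\{a^n : n\geq 0\}$. Since $a/1$ is a unit of $R_W$ lying in $\orc(f)_W$, we have $\orc(f)_W=R_W$, so the semicontent hypothesis applies: for every $g\in S$ with $fg=0$ it yields $\orc(g)_W=\orc(fg)_W=\orc(0)_W=0$. By Proposition~\ref{pr:omnibus}\ref{it:loc}, $S_W$ is Ohm-Rush over $R_W$ with $\orc_{R_W S_W}(g/1)=\orc(g)_W=0$, so the Ohm-Rush property forces $g/1=0$ in $S_a$. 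Because $S\into S_a$ is injective, $g=0$; hence $f$ is regular.

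For the converse, assume $R$ is Noetherian and that $\orc(f)$ contains no regular element, and I would show $f$ is a zero divisor. Every element of $\orc(f)$ is then a zero divisor of $R$, so $\orc(f)$ lies inside the set of zero divisors, which equals $\bigcup_{\p\in\Ass R}\p$, a finite union since $R$ is Noetherian. By prime avoidance, $\orc(f)\subseteq\p$ for some $\p\in\Ass R$, and by the definition of an associated prime there is a nonzero $r\in R$ with $\p=\ann_R(r)$. Then $r\,\orc(f)\subseteq r\p=0$, so by Proposition~\ref{pr:omnibus}(c) we get $\orc(rf)=r\,\orc(f)=0$, and the Ohm-Rush property gives $rf\in\orc(rf)S=0$. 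Finally, faithful flatness makes $R\ra S$ injective, so the image of $r$ is a nonzero element of $S$ annihilating $f$; thus $f$ is not regular, proving the contrapositive.

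The routine parts here are the localization bookkeeping and the standard facts about associated primes, all available from Proposition~\ref{pr:omnibus} and basic Noetherian commutative algebra. The point that most needs care — and the one I would flag as the main obstacle — is the interchange of regularity between $R$ and $S$: the forward direction needs that a regular element of $R$ stays regular in $S$ (so that $g/1=0$ in $S_a$ genuinely returns $g=0$ in $S$), while the converse needs that a nonzero annihilator produced in $R$ remains nonzero in $S$. These rest on flatness and faithful flatness respectively, and they are precisely the places where the semicontent hypotheses (rather than the mere Ohm-Rush property) are invoked, so I would be careful to use them explicitly instead of treating the two rings interchangeably.
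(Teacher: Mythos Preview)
Your proposal is correct and follows essentially the same approach as the paper. The only cosmetic difference is in the forward direction: the paper uses that $\orc(g)$ is finitely generated (a general feature of Ohm-Rush contents) to descend from $\orc(g)_W=0$ to $\orc(g)=0$ via regularity of $w$ in $R$, whereas you pass through $g/1=0$ in $S_a$ and descend via regularity of $a$ in $S$; and in the converse the paper simply cites Kaplansky's theorem where you spell out the associated-primes/prime-avoidance argument and are (commendably) more explicit about why the image of $r$ in $S$ is nonzero.
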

 
 \begin{proof}
 First suppose $\orc(f)$ contains a regular element $w$.  Let $g\in S$ with $fg=0$.  Let $W$ be the multiplicative subset of $R$ generated by $w$.  Then $\orc(f)_W = R_W$, so by definition of the semicontent property, $\orc(g)_W = \orc(fg)_W = 0$.  Since $\orc(g)$ is finitely generated, it follows that there is some $n\in \N$ with $w^n\orc(g)=0$.  But then since $w^n$ is a regular element of $R$, we have $\orc(g)=0$, whence $g=0$.
Thus, $f \in S$ is regular.

 On the other hand, suppose $R$ is Noetherian and $\orc(f)$ consists of zero divisors.
 By \cite[Theorem 82]{Kap-CR}, there is some nonzero $r\in R$ with $0=r\orc(f) = \orc(rf)$.  Hence $rf=0$, and so $f$ is not a regular element of $S$.
 \end{proof}
 
 \begin{thm}\label{thm:completefree}
 Let $(R,\m)$ be a complete Noetherian local ring.  Let $S$ be a Noetherian semicontent $R$-algebra that is free as an $R$-module.  Let $f\in S$ be a regular element that is Gaussian over $R$.  Then $\orc(f)$ is principal.
 \end{thm}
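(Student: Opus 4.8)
The plan is to reduce to the approximately Gorenstein case already settled in Theorem~\ref{thm:appGor}, paying for the reduction with the regularity of $f$. Since $f$ is regular and $R$ is Noetherian, Proposition~\ref{pr:McCoy} furnishes a regular element $w\in\orc(f)$; in particular $\depth R\geq 1$ and $\orc(f)$ lies in no associated prime of $R$. As $R$ is local, it suffices to prove $\mu_R(\orc(f))=1$.

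First I would pass to the reduced ring. Writing $N$ for the (nilpotent) nilradical of $R$, the quotient $R/N$ is complete, local, and reduced, hence approximately Gorenstein by the cited results of Hochster. The base change $S/NS=S\otimes_R R/N$ is again free, hence faithfully flat and Ohm-Rush over $R/N$; moreover $\m S$ is prime in $S$ by the weak content property, so $\m S/NS$ is prime in $S/NS$ and the maximal ideal of $R/N$ extends to a prime. The image $\bar f$ is Gaussian over $R/N$ by the computation with Proposition~\ref{pr:omnibus}\ref{it:modout} carried out in the proof of Theorem~\ref{thm:appGor}. Thus Theorem~\ref{thm:appGor} applies and $\overline{\orc(f)}=\orc_{R/N}(\bar f)$ is principal, say generated by $\bar a$ with $a\in\orc(f)$. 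Writing $w=as+n$ with $n\in N$ and using that $n$ is nilpotent, one checks that $as$, and hence $a$, is regular in $R$. Consequently $\orc(f)=(a)+J$ with $a$ regular and $J:=\orc(f)\cap N$ nilpotent, and the theorem reduces to the claim $J\subseteq(a)$, equivalently $f\in aS$, equivalently $\orc(f)=(a)$.

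To isolate the essential point I would localize at $\m S$ and work in $T=S_{\m S}$, a local content $R$-algebra with $\orc_{RT}=\orc_{RS}$ by \cite[Theorem 6.2]{OhmRu-content}. By the Ohm-Rush property $f=ah+\nu$ with $\nu\in JS$ nilpotent and $\orc(h)=R$ (the latter because, modulo $N$, $(\bar a)=\overline{\orc(f)}\subseteq\bar a\,\overline{\orc(h)}$ with $\bar a$ regular). Since $T$ is local and $\orc_T(h)=R$, the element $h$ is a unit of $T$, and $F:=h^{-1}f=a+h^{-1}\nu$ is a regular element of $T$ which is again Gaussian over $R$, because multiplication by the unit-content element $h^{-1}$ preserves content by the content-algebra property. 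Everything thus comes down to the clean assertion: if $F=a+\nu$ in a local content $R$-algebra $T$ is regular and Gaussian, with $a\in R$ regular and $\nu$ nilpotent, then $\nu\in aT$.

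I expect this nilpotent-absorption step to be the main obstacle, and it is precisely where regularity of $f$ (not merely the containment of a regular element in $\orc(f)$) must be combined with Gaussianity. Regularity alone does not suffice: over $R=k[[s,t,u]]/(s^2,st)$ the element $u+sx\in R[x]$ is regular with content $(u,s)$, yet it is not Gaussian and its content is not principal. I would attempt an induction on the nilpotency index of $N$: after replacing $R$ by $R/N^{\,k-1}$ one may assume $\nu^2=0$, and then the Gaussian identity forces, for every $g\in T$, the containment $\orc(\nu)\orc(g)\subseteq a\orc(g)+\orc(\nu g)$. The difficulty is that probing with $g$ of unit or nilpotent content yields only the trivial relation $\orc(\nu)^2\subseteq a\orc(\nu)$; what is needed is a witness $g$ whose content meets both a regular element and the annihilator of a chosen generator of $\orc(\nu)$, so that a cancellation in $\orc(\nu g)$ produces a genuine constraint (this is the abstract counterpart of the choice $g=u+tx$ detecting $ts=0$ in the example). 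Producing such witnesses inside an arbitrary free semicontent algebra, and thereby forcing $\orc(\nu)\subseteq(a)$, is the crux on which the argument turns.
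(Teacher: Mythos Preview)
Your reduction modulo the nilradical matches the paper's opening move, and your identification of a regular $a\in\orc(f)$ with $\orc(f)\equiv(a)\pmod{\n}$ is correct. But the proposal is openly unfinished: you name the ``nilpotent-absorption'' step as the crux and then stop, having extracted from Gaussianity only the trivial relation $\orc(\nu)^2\subseteq a\,\orc(\nu)$. The witness $g$ you say you need does exist, and the paper constructs it explicitly; your passage to $T=S_{\m S}$ and the substitution $F=h^{-1}f=a+h^{-1}\nu$ in fact discard the free-basis coordinates that make that construction possible.

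The paper stays with the expansion $f=\sum_{i=1}^n a_ie_i$ in a free basis, takes $a=a_k$, and uses the filtration $I_j:=\n^{\,j}Q\cap R$ with $Q$ the total quotient ring of $R$---chosen precisely so that $a$ remains regular modulo every $I_j$ (this, not induction on the nilpotency index of $\n$, is the correct descending chain; $a$ need not be regular modulo $\n^{\,j}$). Assuming inductively $\orc(f)\subseteq aR+I_j$, write $a_i=r_ia+c_i$ with $c_i\in I_j$, arranging $r_k=1$, $c_k=0$; set $f_1=\sum r_ie_i$, $f_2=\sum c_ie_i$, and test Gaussianity against the \emph{conjugate} $g:=af_1-f_2$. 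The basis bookkeeping (specifically $r_k=1$, $c_k=0$) forces $\orc(g)=aR+\orc(f_2)=\orc(f)$, while $fg=a^2f_1^2-f_2^2$, so
\[
(aR+\orc(f_2))^2=\orc(f)\orc(g)=\orc(fg)\subseteq a^2R+\orc(f_2)^2\subseteq a^2R+I_{j+1}.
\]
Hence $a\,\orc(f_2)\subseteq a^2R+I_{j+1}$, and cancelling $a$ (legitimate modulo $I_{j+1}$ by the choice of filtration) gives $\orc(f)=aR+\orc(f_2)\subseteq aR+I_{j+1}$. Since $I_j=0$ for $j\gg 0$, this completes the argument you were missing.
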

 
 \begin{proof}
Since $R$ is principal, we may assume $\orc(f) \subseteq \m$.  Let $\n$ be the nilradical of $R$.  Then $S/\n S$ is Ohm-Rush over $R/\n$ and $\orc(f+\n S) = (\orc(f)+\n)/\n$.  But since the maximal ideal of $R/\n$ extends to a prime ideal of $S/\n S$ and $R/\n$ is approximately Gorenstein \cite[1.4 and 1.7]{Ho-purity}, we have by Theorem~\ref{thm:appGor} that $\orc(f+\n S) = (\orc(f)+\n)/\n$ is a principal ideal of $R/\n$.  Choose an element $a'\in R$ that generates $\orc(f)$ mod $\n$.  By Proposition~\ref{pr:McCoy}, $a'$ must be a regular element of $R$.  By the freeness condition, there exist $e_1, \ldots, e_n$ in a free basis of $S$ over $R$ such that $f = \sum_{i=1}^n a_i e_i$ for uniquely determined $a_i \in R$, and then $\orc(f) = (a_1, \ldots, a_n)$ \cite[Corollary 1.4]{OhmRu-content}.  But then likewise, working over $R/\n$, we have $(\overline{a'}) = \bar\orc(f+\n S) = (\orc(f)+\n) / \n = (\overline{a_1}, \ldots, \overline{a_n})$.  Since $R/\n$ is local, any minimal system of generators of $(\overline{a'})$ must be of size 1.  Hence there is some $k$, $1\leq k \leq n$, with $a_k \equiv a'$ mod $\n$.  Write $a=a_k$.  Then $a$ is a regular element of $R$, $\orc(f)$ is generated mod $\n$ by the image of $a$, and $a\in \orc(f)$.

Now let $T$ be the total quotient ring of $R$ and set $I_j := \n^j T \cap R$ for positive integers $j$.  Note that $\n = I_1$, $I_j^2 \subseteq I_{j+1}$, and $a$ is regular on $R/I_j$ for all $j$.

We claim that $\orc(f) \subseteq aR + I_j$ for all $j$.  Since $aR \subseteq \orc(f)$ and $I_j = 0$ for $j\gg 0$, the claim will prove the result.  We prove the claim by induction, starting with the observation that $\orc(f)\subseteq aR+\n = aR + I_1$.  Now let $j\geq 1$ and assume that $\orc(f) \subseteq aR + I_j$.  For each $i\in \{1, \ldots, n\}$, we have $a_i \in aR+I_j$, so we write $a_i = r_i a + c_i$, $r_i \in R$, $c_i \in I_j$.  For $i=k$, we may do this with $r_k=1$ and $c_k=0$.  Write $f_1 := \sum_{i=1}^n r_i e_i$ and $f_2 := \sum_{i=1}^n c_i e_i$.  Note that $f_2 \in I_j S$.  Then $f=af_1 + f_2$.  Since $\orc(f) \equiv \orc(f_2)$ mod $aR$ and $a\in \orc(f)$, we have $\orc(f) = aR + \orc(f_2)$.  Now set $g := af_1 - f_2$.  We similarly have $a\in \orc(g)$ (since $a$ is the coefficient of $e_k$ in $g$) and $\orc(g) \equiv \orc(f_2)$ mod $aR$, so $\orc(g) = aR + \orc(f_2)$.  But then by the Gaussian property for $f$, we have \begin{align*}
a^2 R + a\orc(f_2) + \orc(f_2)^2 &= (aR + \orc(f_2))^2 = \orc(f)\orc(g) \\
&= \orc(fg) = \orc(a^2 f_1^2 - f_2^2) \subseteq a^2 \orc(f_1^2) + \orc(f_2^2) \\
&\subseteq a^2 R + \orc(f_2)^2 \subseteq a^2 R + I_j^2 \subseteq a^2 R + I_{j+1}.
\end{align*}
Since $\orc(f_2)^2 \subseteq I_j^2 \subseteq I_{j+1}$, it follows that $a\orc(f_2) \subseteq a^2 R + I_{j+1}$.  But since $a$ is regular mod $I_{j+1}$, it follows that $\orc(f_2) \subseteq aR + I_{j+1}$.  Thus, $\orc(f) = aR + \orc(f_2) \subseteq aR + I_{j+1}$, completing the inductive step, and thus the proof.
 \end{proof}

 \begin{thm}\label{thm:localNoeth}
 Let $R$ be a locally Noetherian (resp. Noetherian) ring, and let $S$ be an $R$-algebra that is free as an $R$-module, such that $S \otimes_R \widehat{R_\m}$ is a 
semicontent $\widehat{R_\m}$-algebra for all maximal ideals $\m$ of $R$.  Let $f\in S$ be a regular element that is Gaussian with respect to the given map $R \rightarrow S$.  Then $\orc(f)$ is locally principal (resp. invertible).
 \end{thm}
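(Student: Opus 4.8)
The plan is to reduce to the complete local case, apply Theorem~\ref{thm:completefree}, and then descend. Since local principality is checked prime by prime, I would fix a maximal ideal $\m$ of $R$ and pass to $R_\m$ and $S_\m = S\otimes_R R_\m$. Freeness makes $S$ flat, so Proposition~\ref{pr:omnibus}\ref{it:loc} gives $(\orc_{R,S}(f))_\m = \orc_{R_\m,S_\m}(f/1)$, and it suffices to show the right-hand ideal is principal. Both properties of $f$ that I need survive localization: Gaussianness of $f/1$ follows from Proposition~\ref{pr:omnibus}\ref{it:loc} applied to each content appearing in the defining identity $\orc(fg)=\orc(f)\orc(g)$, while regularity of $f/1$ in $S_\m$ follows from a short zerodivisor chase using that $f$ is a nonzerodivisor in $S$ and that denominators avoid $\m$. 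Hence I may assume $(R,\m)$ is Noetherian local, $S$ is free, $f$ is a regular Gaussian element, and $S\otimes_R \hat R$ is semicontent, where $\hat R$ is the $\m$-adic completion.

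Next I would complete. Proposition~\ref{pr:gausscomplete} shows that $f\otimes 1 \in S\otimes_R \hat R$ is Gaussian over $\hat R$. Regularity also ascends: multiplication by $f$ is an injective $R$-linear endomorphism of $S$, and tensoring $0\to S \xrightarrow{f} S$ with the flat algebra $\hat R$ preserves injectivity, so $f\otimes 1$ is a nonzerodivisor in $S\otimes_R \hat R$. With $\hat R$ complete Noetherian local and $S\otimes_R\hat R$ a free semicontent $\hat R$-algebra, Theorem~\ref{thm:completefree} then yields that $\orc_{\hat R}(f\otimes 1)$ is principal.

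To descend I would work in coordinates. Writing $f=\sum_{i=1}^n a_i e_i$ in a free $R$-basis, \cite[Corollary 1.4]{OhmRu-content} gives $\orc_R(f)=(a_1,\dots,a_n)$, and the same coordinates compute $\orc_{\hat R}(f\otimes 1)=(a_1,\dots,a_n)\hat R = \orc_R(f)\hat R$. Since $R\to\hat R$ is flat local with $\hat\m = \m\hat R$, Remark~\ref{rem:samegens} gives $\mu_R(\orc_R(f)) = \mu_{\hat R}(\orc_R(f)\hat R)=\mu_{\hat R}(\orc_{\hat R}(f\otimes 1))=1$, so $\orc_R(f)$ is principal; undoing the localization proves local principality in general. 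In the global Noetherian case I would then upgrade to invertibility: by Proposition~\ref{pr:McCoy} (using that $R$ is Noetherian and $f$ is regular) $\orc(f)$ contains a nonzerodivisor, and a finitely generated, locally principal ideal containing a nonzerodivisor is invertible.

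The step I expect to be the main obstacle is transferring the hypotheses of Theorem~\ref{thm:completefree} to the completed algebra. Ascent of regularity is handled cleanly by flatness, but $S\otimes_R\hat R$ need not be Noetherian as a ring, which is nominally required by that theorem. The way around this is to observe that the proof of Theorem~\ref{thm:completefree} uses only that the content of the element in question is finitely generated, and this is automatic here since $f$ (hence $f\otimes 1$) involves only finitely many basis vectors. Once this observation is in place, the reduction and descent are routine.
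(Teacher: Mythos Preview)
Your proposal is correct and follows essentially the same route as the paper: localize at a maximal ideal, pass to the completion via Proposition~\ref{pr:gausscomplete}, transfer regularity by flatness, invoke Theorem~\ref{thm:completefree}, and descend the principal-generator count using Remark~\ref{rem:samegens}; the invertibility upgrade via Proposition~\ref{pr:McCoy} and the standard fact about locally principal ideals containing a nonzerodivisor is likewise what the paper does (citing \cite{Gr-fracinv}).

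The one place you are actually \emph{more} careful than the paper is in flagging that $S\otimes_R\widehat{R_\m}$ need not be Noetherian, whereas Theorem~\ref{thm:completefree} nominally asks for a Noetherian semicontent algebra. The paper simply applies Theorem~\ref{thm:completefree} without comment. Your proposed fix---observing that the proof of Theorem~\ref{thm:completefree} never uses Noetherianness of $S$---is correct: the only ingredients there are Theorem~\ref{thm:appGor} (which has no Noetherian hypothesis on $S$), the converse direction of Proposition~\ref{pr:McCoy} (which needs only $R$ Noetherian), freeness, and elementary content manipulations. So your diagnosis and workaround are sound; if anything, you could phrase it more strongly than ``uses only that the content is finitely generated,'' since in fact the Noetherian hypothesis on $S$ is entirely superfluous to that proof. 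One small stylistic point: your ``zerodivisor chase'' for regularity of $f/1$ in $S_\m$ is fine, but the paper's one-line flatness argument (tensor the injection $S\xrightarrow{f}S$ with the flat $R$-algebra $R_\m$) is cleaner and is exactly what you already do at the completion step.
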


 \begin{proof}
 Let $\m$ be a maximal ideal of $R$.  Then $f/1 \in S_W$ is Gaussian over $R_\m$, where $W = R \setminus \m$.  Then by Proposition~\ref{pr:gausscomplete}, $(f/1) \otimes 1 \in S_W \otimes_{R_\m} \widehat{R_\m} = S \otimes_R R_\m \otimes_{R_\m} \widehat{R_\m} =S \otimes_R \widehat{R_\m}$ is Gaussian over $\widehat{R_\m}$.  Moreover, since $\widehat{R_\m}$ is flat over $R_\m$ and $R_\m$ is flat over $R$, $f \otimes 1$ is regular in $S \otimes_R \widehat{R_\m}$.   Then Theorem~\ref{thm:completefree} applies to show that $\orc((f/1) \otimes 1)$ is principal in $\widehat{R_\m}$.  But $\orc((f/1) \otimes 1) = \orc(f/1)\widehat{R_\m}$ by the freeness assumption, so by 
 Remark~\ref{rem:samegens}, 
$\orc(f/1)$ is principal in $R_\m$. But $\orc(f/1) = \orc(f)R_\m$ by \cite[Theorem 3.1]{OhmRu-content}, so $\orc(f)$ is locally principal.

In the case where $R$ is Noetherian, we have by Proposition~\ref{pr:McCoy} that $\orc(f)$ contains a regular element, but then by \cite[Teorema 1.1]{Gr-fracinv}, $\orc(f)$ is invertible. 
 \end{proof}
 
 \begin{cor}\label{cor:localNoeth}
 Let $R$ be a locally Noetherian (resp. Noetherian) ring. Let $S$ be an $R$-algebra satisfying the conditions of Theorem~\ref{thm:localNoeth}.  Then a regular element $f\in S$ is Gaussian over $R$ if and only if $\orc(f)$ is locally principal (resp. invertible).
 \end{cor}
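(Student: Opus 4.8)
The plan is to assemble this biconditional from the core theorems already proved, with the only genuine work being a localization-and-completion argument to supply the reverse implication, since $S$ is not assumed to be semicontent over $R$ itself.

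The forward direction---that a regular Gaussian element $f$ has locally principal (resp. invertible) content---is precisely the content of Theorem~\ref{thm:localNoeth}, so nothing new is required there.

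For the reverse direction, I would first observe that Gaussianness is a local condition. By Proposition~\ref{pr:omnibus}\ref{it:loc}, the contents of $f$, $g$, and $fg$ all localize correctly along $R \to R_\m$ (as $S$ is free, hence flat), and equality of the ideals $\orc(fg)$ and $\orc(f)\orc(g)$ of $R$ may be tested at every maximal ideal $\m$. Hence it suffices to show that $f/1$ is Gaussian over $R_\m$ for each maximal $\m$. Assuming $\orc(f)$ is locally principal, $\orc(f)R_\m = \orc_{R_\m}(f/1)$ is principal. I would then pass to the completion: by Proposition~\ref{pr:gausscomplete}, $f/1$ is Gaussian over $R_\m$ if and only if its image $(f/1) \otimes 1$ in $S \otimes_R \widehat{R_\m}$ is Gaussian over $\widehat{R_\m}$. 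The content of this image is $\orc_{R_\m}(f/1)\widehat{R_\m}$ by the freeness hypothesis (exactly as in the proof of Theorem~\ref{thm:localNoeth}), which is a principal ideal of $\widehat{R_\m}$. Since $S \otimes_R \widehat{R_\m}$ is semicontent over $\widehat{R_\m}$ by hypothesis, Theorem~\ref{thm:invGauss} applies directly to yield that $(f/1) \otimes 1$ is Gaussian over $\widehat{R_\m}$. Tracing back through Proposition~\ref{pr:gausscomplete} and the local-to-global reduction then gives that $f$ is Gaussian over $R$. The Noetherian ``invertible'' case reduces at once to the ``locally principal'' case, since an invertible ideal is in particular locally principal.

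The main obstacle---really the only point requiring care---is that Theorem~\ref{thm:invGauss} needs a genuine semicontent hypothesis, which is available only after completing at each maximal ideal; the reverse direction therefore cannot be applied to $R$ directly and must be routed through $R_\m$ and $\widehat{R_\m}$. The two facts that make this routing legitimate are that Gaussianness descends from the local rings $R_\m$ back to $R$ (via flat localization of content in Proposition~\ref{pr:omnibus}\ref{it:loc}) and Proposition~\ref{pr:gausscomplete}, which transports Gaussianness between $R_\m$ and its completion. Note that regularity of $f$ plays no role in this reverse implication; it is needed only for the forward direction supplied by Theorem~\ref{thm:localNoeth}.
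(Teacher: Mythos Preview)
Your argument is correct, and in fact more careful than the paper's own proof, which consists of a bare citation of Theorems~\ref{thm:invGauss} and \ref{thm:localNoeth}. Taken at face value, that citation leaves exactly the gap you identified: Theorem~\ref{thm:invGauss} requires $S$ to be semicontent over $R$, whereas the hypotheses of Theorem~\ref{thm:localNoeth} only guarantee that $S \otimes_R \widehat{R_\m}$ is semicontent over $\widehat{R_\m}$ for each maximal ideal $\m$. Your routing---localize at $\m$, pass to the completion via Proposition~\ref{pr:gausscomplete}, apply Theorem~\ref{thm:invGauss} over $\widehat{R_\m}$ where the semicontent hypothesis is actually available, and descend---is precisely the way to close this gap, and it mirrors the structure of the forward direction in the proof of Theorem~\ref{thm:localNoeth}. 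So what you have written is not really a different approach so much as the spelled-out, rigorous version of what the paper's one-liner presumably intends. Your observation that regularity of $f$ is irrelevant to this reverse implication is also correct.
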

 
 \begin{proof}

Theorems~\ref{thm:invGauss} and \ref{thm:localNoeth}
.
 \end{proof}

\section{A characterization of Pr\"ufer domains}\label{sec:Pr}

For the next result, we use some ideas from Hwa Tsang's thesis \cite{Ts-Gauss}
in the proof.

\begin{prop}\label{pr:nonGauss}
Let $(R,\m)$ be a local integral domain and let $S$ be an $R$-algebra that is free of rank $>1$ as an $R$-module.  Suppose there is an ideal $I$ of $R$ that is minimally generated by two elements.  Then there is some non-Gaussian element $f\in S$ with $c(f)=I$.
\end{prop}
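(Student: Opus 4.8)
The plan is to reduce to a single well-chosen $f$ and to extract a contradiction from the Gaussian identity using only that $I$ is non-invertible. First I would arrange that $1_S$ lies in a free basis: writing $1_S=\sum_\lambda u_\lambda e_\lambda$ in a basis $\{e_\lambda\}$ of $S$ and reducing modulo $\m$, the identity of the nonzero $(R/\m)$-algebra $S/\m S$ is nonzero, so some $u_\lambda\notin\m$ is a unit of the local ring $R$; hence I may take $1_S$ to be one of the $e_\lambda$. Choosing a second basis vector $t$ and a minimal generating set $\{a,b\}$ of $I$, I set $f=a\cdot 1_S+bt$, so that $\orc(f)=(a,b)=I$ by the content formula for free modules \cite[Corollary 1.4]{OhmRu-content}. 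Here $I\neq 0$, and being non-principal in the local ring $R$ it is not invertible. The aim is then to produce some $g$ with $\orc(fg)\subsetneq\orc(f)\orc(g)=I\,\orc(g)$.

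The core observation is an invertibility engine: in a domain, if $I\,J=(w)$ with $w\neq 0$, then $I\cdot(w^{-1}J)=R$, so $I$ is invertible, hence principal because $R$ is local---contradicting minimal $2$-generation. Thus it suffices to find $g\neq 0$ with $\orc(fg)$ principal. If $f$ is a zero divisor, say $fh=0$ with $h\neq 0$, then Gaussianness would force $I\,\orc(h)=\orc(fh)=0$ with $I\neq 0$ and $\orc(h)\neq 0$, which is impossible; so such an $f$ is non-Gaussian outright. If instead $f$ is regular and $S$ has finite rank $n$, let $M_f\in\mathrm{End}_R(S)$ denote multiplication by $f$; regularity forces $\det M_f\neq 0$ over the domain $R$, and Cayley--Hamilton applied to $1_S$ yields a $g\in R[f]$ with
\[
fg=\pm(\det M_f)\cdot 1_S ,
\]
a nonzero scalar multiple of $1_S$. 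Then $\orc(fg)=(\det M_f)$ is principal and nonzero, while $g\neq 0$ gives $\orc(g)\neq 0$; the engine now produces the contradiction, so $f$ is non-Gaussian. This settles the finite-rank case cleanly and for \emph{every} minimally $2$-generated $I$.

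The main obstacle is the case of infinite rank (for instance $S=R[x]$ with $t=x$ and $f=a+bx$), where Cayley--Hamilton is unavailable and the scalar trick cannot be pushed through. Here I would instead adapt the explicit construction from Tsang's thesis, producing a partner $g$ of higher degree in $t$ for which the Gaussian identity fails. The delicate point---and the reason a linear partner does not suffice---is that the naive choice $g=a\cdot 1_S-bt$ gives $\orc(fg)=\orc(a^2-b^2t^2)\subseteq(a^2,b^2)$ and so only yields the weaker relation $ab\in(a^2,b^2)$, which can hold even when $I$ is non-invertible (e.g.\ $R=k[[u,v]]/(uv-u^2-v^2)$ over a field $k$ with $-3$ a non-square, where $I=(u,v)$ is minimally $2$-generated yet $uv=u^2+v^2\in(u^2,v^2)$). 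Consequently one must pass to higher degree and verify that the content of the resulting product omits a needed element such as $ab$; carrying out and controlling that content computation is the crux of the infinite-rank argument.
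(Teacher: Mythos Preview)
Your finite-rank argument via Cayley--Hamilton is correct and is a genuinely different route from the paper's: manufacturing a $g\in R[f]$ with $fg=\pm(\det M_f)\cdot 1_S$, so that $\orc(fg)$ is principal, and then invoking the invertibility engine is clean and sidesteps all explicit content bookkeeping. But the infinite-rank case is not proved. You correctly identify the obstacle, give a nice example showing the naive partner $g=a-bt$ is insufficient, and then gesture at ``adapting Tsang's construction''---without actually doing so. Since infinite rank includes the principal motivating examples (polynomial rings, monoid rings over infinite monoids, base change by an infinite field extension), this is the heart of the proposition, not a detail to be deferred.

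The paper's argument, by contrast, is uniform in the rank and uses only two free basis elements $s,t$. With $f=as+bt$, the first partner $g=bs+at$ together with Gaussianness yields $(a,b)^2\subseteq(ab,\,a^2+b^2)$; writing $a^2=\alpha ab+\beta(a^2+b^2)$ and using that $R$ is local (so one of $\beta,\,1-\beta$ is a unit) forces, after possibly swapping $a$ and $b$, a relation $a^2=\gamma ab+\delta b^2$. The decisive second partner is $h=(a-\gamma b)s-bt$: one computes directly that $fh=(a^2-\gamma ab)s^2-\gamma b^2 st-b^2t^2=b^2(\delta s^2-\gamma st-t^2)$, so by flatness $\orc(fh)\subseteq(b^2)$, and Gaussianness with $\orc(h)=(a,b)$ gives $(a,b)^2\subseteq(b^2)$, whence $ab\in b^2R$ and $a\in bR$, contradicting minimal $2$-generation. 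This two-step construction is exactly the missing ``higher-degree'' computation you alluded to; it uses only quadratic expressions in $s,t$, whose contents are controlled by subadditivity and the flatness identity $\orc(r\cdot(-))=r\cdot\orc(-)$, so nothing about the rank of $S$ enters beyond the availability of two basis elements.
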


\begin{proof}
Write $I=(a,b)$, and let $s,t$ be part of a free basis for $S$ over $R$.  Let $f=as+bt$; then $\orc(f)=I$ by \cite[Corollary 1.4]{OhmRu-content}.

Suppose $f$ is Gaussian.  Let $g=bs+at$.  Then \[
(a,b)^2 = \orc(f)\orc(g) = \orc(fg) = \orc(abs^2 +(a^2 + b^2)st + abt^2) \subseteq (ab, a^2+b^2),
\]
where the containment follows from flatness of $S$ over $R$.  
In particular, $a^2 \in (ab, a^2+b^2)$.  Say $a^2 = \alpha ab + \beta (a^2+b^2)$ where $\alpha, \beta \in R$.  Then $(1-\beta)a^2 - \beta b^2 = \alpha ab$, so since $R$ is local, either $\beta$ or $1-\beta$ is a unit.  In the former case, $b^2 \in (ab, a^2)$, whereas in the latter case, $a^2 \in (ab, b^2)$.  Thus, by switching labels of $a,b$ if necessary, we may assume $a^2 \in (ab, b^2)$.

Accordingly write $a^2 = \gamma ab + \delta b^2$, where $\gamma, \delta \in R$.  Let $h=(a-\gamma b)s - bt$.  Then \begin{align*}
(a,b)^2 &= \orc(f)\orc(h) = \orc(fh) = \orc((a^2 - \gamma ab)s^2-\gamma b^2 st- b^2 t^2) \\
&= \orc(\delta b^2 s^2 - \gamma b^2 st - b^2 t^2) = \orc(b^2(\delta s^2 - \gamma st - t^2)) \subseteq b^2 R,
\end{align*}
where the containment again follows from flatness of $S$ over $R$.  
In particular, $ab \in b^2R$.  But then since $b$ is regular, we have $a\in bR$, whence $(a,b) = (b)$.  But this contradicts the fact that $I$ is not principal.  Hence $f$ cannot be Gaussian.
\end{proof}

As a corollary, we obtain the following criterion for an integral domain to be Pr\"ufer (and hence Dedekind if we also assume the ring is Noetherian).
\begin{thm}\label{thm:Pr}
Let $R$ be an integral domain.  Suppose there exists a Gaussian $R$-algebra $S$ that is free of rank $>1$ (i.e. free and a nontrivial extension) as an $R$-module.  Then $R$ is a Pr\"ufer domain.
\end{thm}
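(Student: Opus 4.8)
The plan is to reduce to the local case and then invoke the contrapositive of Proposition~\ref{pr:nonGauss}. Recall that an integral domain $R$ is Pr\"ufer precisely when $R_\m$ is a valuation domain for every maximal ideal $\m$, and that a local domain is a valuation domain exactly when each of its finitely generated ideals is principal; by an easy induction on the number of generators (repeatedly replacing a $2$-generated subideal by a single generator), this holds as soon as every $2$-generated ideal is principal. So it suffices to show that for every maximal ideal $\m$ of $R$, each $2$-generated ideal of $R_\m$ is principal.

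First I would check that the hypotheses descend to the localization, writing $W = R\setminus\m$ and $S_\m := S\otimes_R R_\m$. Since $S$ is free of rank $>1$ over $R$, the module $S_\m$ is free over $R_\m$ of the same rank $>1$ (in particular faithfully flat and Ohm-Rush). Moreover the Gaussian property descends: every element of $S_\m$ has the form $f/t$ with $f\in S$, $t\in W$, and by Proposition~\ref{pr:omnibus}\ref{it:loc} its content is $\orc(f)_W$. Hence for $f/s,\, g/t \in S_\m$, using that $S$ is Gaussian over $R$, one computes $\orc\!\left((f/s)(g/t)\right) = \orc(fg)_W = (\orc(f)\orc(g))_W = \orc(f)_W\,\orc(g)_W = \orc(f/s)\,\orc(g/t)$. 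Thus $S_\m$ is a Gaussian $R_\m$-algebra that is free of rank $>1$.

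Now suppose toward a contradiction that some $2$-generated ideal of $R_\m$ is not principal, i.e.\ is minimally generated by two elements. Since $(R_\m,\m R_\m)$ is a local integral domain over which $S_\m$ is free of rank $>1$, Proposition~\ref{pr:nonGauss} produces a non-Gaussian element of $S_\m$, contradicting the fact just established that $S_\m$ is Gaussian over $R_\m$. Therefore every $2$-generated ideal of $R_\m$ is principal, so $R_\m$ is a valuation domain; as $\m$ was an arbitrary maximal ideal, $R$ is a Pr\"ufer domain.

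The only genuinely nontrivial point is the descent of the Gaussian property to $R_\m$; everything else is a routine combination of the standard local characterizations of Pr\"ufer and valuation domains with Proposition~\ref{pr:nonGauss}. Accordingly, I expect the main (and rather mild) obstacle to be verifying that the content identity localizes cleanly, which is exactly what Proposition~\ref{pr:omnibus}\ref{it:loc} delivers.
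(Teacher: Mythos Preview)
Your proof is correct and follows essentially the same route as the paper: reduce to the local case via Proposition~\ref{pr:omnibus}\ref{it:loc} (the paper cites the equivalent \cite[Theorem~3.1]{OhmRu-content}) and then apply the contrapositive of Proposition~\ref{pr:nonGauss}. You have simply spelled out in detail the verification that Gaussianness and freeness descend to $S_\m$, which the paper compresses into the phrase ``all issues are local.''
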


\begin{proof}
Since all issues are local \cite[Theorem 3.1]{OhmRu-content}, we may assume $(R,\m)$ is a local domain, so that we need only show that every 2-generated ideal is principal.  But this follows directly from Proposition~\ref{pr:nonGauss}.
\end{proof}

\section{Applications}\label{sec:apps}

As the theorems in $\S\ref{sec:Gausslp}$ and $\S\ref{sec:Pr}$ are generalizations of theorems known in polynomial extensions by a single variable, they have applications insofar as there exist ring extensions other than such polynomial extensions that satisfy the conditions.  For example, we may extend some of the results of \cite{HeiHu-Gauss} to the multivariate case, and beyond.  In particular, our next two results extend \cite[Theorem 1.5]{HeiHu-Gauss} and part of \cite[Theorem 3.3]{HeiHu-Gauss} respectively.
\begin{thm}\label{thm:appGorMonoid}
Let $R$ be a locally Noetherian and locally approximately Gorenstein ring, and $G$ a torsion-free commutative cancellative monoid.  Then for any $f \in R[G]$ that is Gaussian over $R$, $\orc(f)$ is locally principal.
In particular this holds when $G=\N^n$, so that $R[G] = R[x_1, \ldots, x_n]$.
\end{thm}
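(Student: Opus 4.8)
The plan is to invoke Theorem~\ref{thm:appGor} directly, after verifying that $R[G]$ satisfies the two structural hypotheses of that theorem: that it is a faithfully flat Ohm-Rush $R$-algebra, and that every maximal ideal of $R$ extends to a prime ideal of $R[G]$. The base ring hypotheses (locally Noetherian, locally approximately Gorenstein) carry over verbatim from Theorem~\ref{thm:appGor} to the present statement, so no work is needed there. The content of the argument is thus to check that the monoid algebra $R[G]$, for $G$ a torsion-free commutative cancellative monoid, is of the required type, and then the conclusion about $\orc(f)$ being locally principal is immediate from the cited theorem.

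\emph{First} I would establish that $R[G]$ is a faithfully flat Ohm-Rush $R$-algebra. Faithful flatness is clear, since $R[G]$ is free as an $R$-module on the basis $\{X^g\}_{g\in G}$, and a nonzero free module is faithfully flat. For the Ohm-Rush property, one expects that for $f = \sum_{g} a_g X^g \in R[G]$, the content $\orc(f)$ equals the ideal $(a_g)_g$ generated by the coefficients; this is exactly the content of McCoy's/Ohm-Rush's computation of content for free modules (cf.\ \cite[Corollary 1.4]{OhmRu-content}), and it immediately yields $f \in \orc(f)\,R[G]$. In fact, the cleanest route is to cite that affine semigroup extensions are semicontent algebras (as noted in the introduction, via \cite{No-content}), which already packages faithful flatness and the Ohm-Rush property together; but even without the full semicontent strength, only faithful flatness and the Ohm-Rush property are needed for Theorem~\ref{thm:appGor}.

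\emph{Next} I would verify the extension condition: every maximal ideal $\m$ of $R$ extends to a prime ideal of $R[G]$. This reduces to checking that $R[G]/\m R[G] \cong (R/\m)[G]$ is a domain, i.e.\ that $k[G]$ is a domain for a field $k$ whenever $G$ is torsion-free, commutative, and cancellative. This is a standard fact about monoid algebras: a cancellative torsion-free commutative monoid embeds into a torsion-free abelian group, hence into a free abelian group (perhaps after recalling that a finitely generated torsion-free abelian group is free, and passing to the direct limit over finitely generated submonoids), so $k[G]$ embeds in a Laurent polynomial ring $k[\Z^{(I)}]$, which is a domain. I expect \emph{this} to be the main obstacle — not because it is deep, but because the standard proof is stated for finitely generated or integral monoids, and one must be careful to handle an arbitrary torsion-free cancellative commutative $G$, reducing to the finitely generated case by a direct-limit argument on the monomials actually appearing in a product.

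Having verified both hypotheses, Theorem~\ref{thm:appGor} applies verbatim and gives that any Gaussian $f \in R[G]$ has $\orc(f)$ locally principal, which is the assertion. The final sentence, specializing $G = \N^n$ so that $R[G] = R[x_1,\dots,x_n]$, requires only the trivial observations that $\N^n$ is torsion-free, commutative, and cancellative.
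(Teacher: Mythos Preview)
Your proposal is correct and follows essentially the same route as the paper: verify the hypotheses of Theorem~\ref{thm:appGor} for $S=R[G]$ and invoke it. The paper simply condenses your verification of faithful flatness, the Ohm-Rush property, and the prime-extension condition into a single citation of \cite[Theorem~3]{No-content} (which shows $R[G]$ is a content algebra, hence has all three properties), whereas you spell these out directly; but the structure of the argument is identical.
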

\begin{proof} Use \cite[Theorem 3]{No-content} to apply Theorem~\ref{thm:appGor} to $S=R[G]$. \end{proof}

\begin{thm}\label{thm:regMonoid}
Let $R$ be a locally Noetherian (resp. Noetherian) ring, let $G$ be a torsion-free commuative cancellative monoid, and let $f \in R[G]$ be a regular element that is Gaussian over $R$.  Then $\orc(f)$ is locally principal (resp. invertible).
In particular, this holds when $G=\N^n$, so that $R[G] = R[x_1, \ldots, x_n]$.
\end{thm}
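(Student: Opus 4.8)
The plan is to deduce this directly from Theorem~\ref{thm:localNoeth}, exactly as Theorem~\ref{thm:appGorMonoid} was deduced from Theorem~\ref{thm:appGor}, by taking $S = R[G]$. To invoke Theorem~\ref{thm:localNoeth} I must verify its two structural hypotheses on $S$: that $R[G]$ is free as an $R$-module, and that $R[G] \otimes_R \widehat{R_\m}$ is a semicontent $\widehat{R_\m}$-algebra for every maximal ideal $\m$ of $R$.

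The first hypothesis is immediate: the monoid ring $R[G]$ has the monomials $\{X^g\}_{g\in G}$ as a free $R$-basis, so it is free as an $R$-module. For the second, I would use the canonical identification $R[G] \otimes_R \widehat{R_\m} \cong \widehat{R_\m}[G]$ together with the fact that $G$ is torsion-free, commutative, and cancellative. By \cite[Theorem 3]{No-content}, the monoid ring of such a $G$ over any commutative base ring is a content algebra, hence in particular semicontent; applying this with base ring $\widehat{R_\m}$ supplies exactly what is needed. Note that this is the same citation already used to establish semicontentness in Theorem~\ref{thm:appGorMonoid}, now applied to the completion rather than to $R$ itself.

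Once both hypotheses are checked, Theorem~\ref{thm:localNoeth} applies verbatim to the regular Gaussian element $f \in R[G]$ and yields that $\orc(f)$ is locally principal when $R$ is locally Noetherian, and invertible when $R$ is Noetherian; the specialization $G = \N^n$, giving $R[G] = R[x_1,\ldots,x_n]$, is then automatic. I do not expect a genuine obstacle, since the analytic content is entirely packaged inside Theorem~\ref{thm:localNoeth}. The only points requiring care are confirming that the monoid-ring content result of \cite{No-content} is available over $\widehat{R_\m}$ (it is, as that result imposes no hypothesis on the base ring) and that content is compatible with the base change $R \to \widehat{R_\m}$, which is precisely the freeness compatibility already exploited within the proof of Theorem~\ref{thm:localNoeth}.
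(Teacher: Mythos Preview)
Your proposal is correct and matches the paper's own proof, which is the single sentence ``Use \cite[Theorem~3]{No-content} to apply Theorem~\ref{thm:localNoeth} to $S=R[G]$.'' You have simply unpacked that sentence by explicitly checking freeness and the semicontent condition on $\widehat{R_\m}[G]$, which is exactly what the citation is meant to supply.
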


\begin{proof} Use \cite[Theorem 3]{No-content} to apply Theorem~\ref{thm:localNoeth} to $S=R[G]$. \end{proof}


For another area of application, recall \cite[Theorems 4.3 and 3.10]{nmeSh-OR} that when $R$ is Noetherian, $R[\![x_1, \ldots, x_n]\!]$ is a semicontent $R$-algebra, even though it is rarely free as an $R$-module.  Hence, we have the following.

\begin{thm}\label{thm:appGorpower}
Let $R$ be a Noetherian locally approximately Gorenstein ring, and $n\in \N$.  Then for any Gaussian $f\in R[\![x_1, \ldots, x_n]\!]$, $\orc(f)$ is locally principal.
\end{thm}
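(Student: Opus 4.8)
The plan is to obtain this as a direct application of Theorem~\ref{thm:appGor} with $S = R[\![x_1, \ldots, x_n]\!]$, so that all the work reduces to verifying the hypotheses of that theorem. The conditions on the base ring are immediate: $R$ is Noetherian, hence locally Noetherian, and locally approximately Gorenstein by assumption. Thus only the two conditions on the algebra $S$ remain to be checked.

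First I would invoke \cite[Theorems 4.3 and 3.10]{nmeSh-OR}, as recalled in the remark preceding the statement, to conclude that $S = R[\![x_1, \ldots, x_n]\!]$ is a semicontent $R$-algebra. By the chain of implications recorded just after the definitions in $\S\ref{sec:OR}$, a semicontent algebra is in particular a faithfully flat Ohm-Rush algebra, which is precisely the first structural hypothesis required by Theorem~\ref{thm:appGor}.

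Second, I would verify that every maximal ideal $\m$ of $R$ extends to a prime ideal of $S$. Since $S$ is faithfully flat over $R$, we have $\m S \neq S$. On the other hand, a semicontent algebra is a weak content algebra, and by the equivalent formulation of the weak content property recorded in the definition, any prime $\p$ of $R$ with $\p S \neq S$ satisfies $\p S \in \Spec S$. Applying this with $\p = \m$ shows that $\m S$ is prime, which is the remaining hypothesis of Theorem~\ref{thm:appGor}.

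With both conditions in hand, Theorem~\ref{thm:appGor} applies verbatim and yields that $\orc(f)$ is locally principal for every Gaussian $f \in S$, completing the argument. Since the proof is purely a matter of hypothesis-checking, I do not anticipate any substantive obstacle; the only point requiring a moment's care is the extension-of-maximal-ideals condition, which is not specific to power series but follows generally by combining faithful flatness with the weak content property. This parallels the earlier application theorems, where the analogous role was played by the semigroup-algebra results of \cite{No-content}.
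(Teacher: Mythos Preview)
Your proposal is correct and follows exactly the paper's approach: the paper's proof is the single line ``Apply Theorem~\ref{thm:appGor} with $S=R[\![x_1, \ldots, x_n]\!]$,'' relying on the preceding remark that power series over a Noetherian ring form a semicontent algebra. Your additional verification that the maximal-ideal-extension hypothesis follows from faithful flatness together with the weak content property is accurate and simply makes explicit what the paper leaves implicit.
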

\begin{proof}
Apply Theorem~\ref{thm:appGor} with $S=R[\![x_1, \ldots, x_n]\!]$.
\end{proof}

Next, recall the following result from \cite[Proposition 3.8]{nmeSh-OR2}:

\begin{prop}\label{pr:fieldext}
Let $L/K$ be a separable field extension such that $K$ is algebraically closed in $L$.  Then for any $K$-algebra $A$, $B = A \otimes_K L$ is a module-free weak content algebra over $K$.  If $A = K[x_1, \ldots, x_m]$ is a polynomial ring over $K$ and $B = L[x_1, \ldots, x_n]$ where $n\geq m$, then $B$ is a module-free semicontent $A$-algebra.
\end{prop}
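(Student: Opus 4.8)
The plan is to handle both assertions through the free-module structure coming from a $K$-basis of $L$, with all contents taken over the base ring $A$ via that basis. Since $L$ is a $K$-vector space it is $K$-free; fixing a $K$-basis $\{e_\lambda\}_{\lambda\in\Lambda}$ of $L$ containing $1$, we get $B = A\otimes_K L = \bigoplus_{\lambda}A(1\otimes e_\lambda)$, a nonzero free $A$-module, hence faithfully flat over $A$. By \cite[Corollary 1.4]{OhmRu-content} this makes $B$ an Ohm-Rush $A$-algebra whose content is read off coordinates: if $f = \sum_\lambda a_\lambda(1\otimes e_\lambda)$ then $\orc_A(f) = (a_\lambda : \lambda\in\Lambda)$. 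This settles module-freeness and reduces the remaining claims to statements about these coordinate ideals.

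For the weak content assertion I would use the intrinsic criterion recalled in \S\ref{sec:OR}: it suffices that $\p B$ be prime for every $\p\in\Spec A$ with $\p B\neq B$. Faithful flatness gives $\p B\neq B$ for proper $\p$, and $K$-flatness of $L$ gives $\p B = \p\otimes_K L$, so $B/\p B\cong (A/\p)\otimes_K L$. Hence everything reduces to showing that $D\otimes_K L$ is an integral domain for every $K$-domain $D$ (take $D = A/\p$). This is exactly the defining property of a \emph{regular} extension, and the hypotheses that $L/K$ is separable with $K$ algebraically closed in $L$ are precisely regularity. I would prove it in two standard steps: first reduce to $D$ finitely generated over $K$ (any putative pair of zero-divisors involves only finitely many elements of $D$, and $L$ is $K$-flat), then embed $D\otimes_K L\hookrightarrow \Frac(D)\otimes_K L$ and invoke the field-theoretic fact that $F\otimes_K L$ is a domain for every field extension $F/K$ when $L/K$ is regular.

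For the semicontent assertion in the polynomial case, I would factor the structure map as $A = K[x_1,\dots,x_m]\to A' := L[x_1,\dots,x_m] = A\otimes_K L\to B = A'[x_{m+1},\dots,x_n]$. The second map is an ordinary polynomial extension, hence a content algebra by \cite[Theorem 3]{No-content}, and content algebras are semicontent; the first map is the scalar extension analyzed above. To combine them I would use the transitivity of content for a tower of free extensions, $\orc_A = \orc_A^{A'}\circ\orc_{A'}$ (write $f\in B$ with $A'$-coefficients, then expand each coefficient in the $e_\lambda$), together with the localization form of the semicontent property: by Proposition~\ref{pr:omnibus}\ref{it:loc} it is enough to show that $\orc_A(f)_W = A_W$ forces $\orc_A(fg)_W = \orc_A(g)_W$, and there the Dedekind--Mertens formula for $A'\to B$ governs the intermediate $A'$-contents.

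The step I expect to be the main obstacle is passing such a Dedekind--Mertens relation down through $\orc_A^{A'}$: the scalar extension $A\to A'$ is only known to be weak content, and weak content by itself does not give the cancellation one needs. The way through is that over a polynomial base the $A$-content is completely explicit, since for $A = K[x_1,\dots,x_m]$ the coefficients of $f$ lie in $L$ and are scalars, so $\orc_A(f)$ is simply the monomial ideal generated by the support of $f$, independent of $L$; moreover, because $L$ is a domain, the Newton polytope of a product is the Minkowski sum of the Newton polytopes of the factors. From this I expect the scalar extension $K[x_1,\dots,x_N]\to L[x_1,\dots,x_N]$ to satisfy its own (monomial) Dedekind--Mertens relation, hence to be a content algebra; the whole tower is then a composite of content algebras, and $B$ is semicontent over $A$. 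Establishing this monomial Dedekind--Mertens relation is the crux of the argument.
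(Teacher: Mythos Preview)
The paper does not prove this proposition; it is quoted verbatim from \cite[Proposition 3.8]{nmeSh-OR2}, so there is no in-paper argument to compare against. I will therefore evaluate your proposal on its own terms.

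Your treatment of the first assertion is fine and standard: freeness of $B$ over $A$ via a $K$-basis of $L$ gives Ohm--Rush and faithful flatness, and the prime-extension criterion for weak content reduces to $(A/\p)\otimes_K L$ being a domain, which is exactly the regularity of $L/K$ (separable with $K$ algebraically closed in $L$).

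The second assertion, however, has a genuine gap at exactly the point you flag as ``the crux.'' Your description of $\orc_A$ for the scalar extension $A=K[x_1,\dots,x_m]\to A'=L[x_1,\dots,x_m]$ is incorrect: the $A$-content of $f\in A'$ is \emph{not} the monomial ideal on the support of $f$. Writing $f=\sum_\alpha c_\alpha x^\alpha$ with $c_\alpha\in L$ and expanding $c_\alpha=\sum_\lambda k_{\alpha\lambda}e_\lambda$ in your chosen $K$-basis, one has
\[
\orc_A(f)=\Bigl(\sum_\alpha k_{\alpha\lambda}\,x^\alpha\ :\ \lambda\in\Lambda\Bigr),
\]
an ideal generated by honest polynomials in $A$, not by monomials. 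For instance, with $A=K[x,y]$ and $f=(x+y)e_1$ one gets $\orc_A(f)=(x+y)$. So the Newton-polytope/Minkowski-sum observation, while true for supports over a domain, says nothing about $\orc_A$, and your proposed ``monomial Dedekind--Mertens'' relation for $A\to A'$ does not follow. Concretely, with $f=x+\pi y$ and $g=x-\pi y$ in $\Q(\pi)[x,y]$ one computes $\orc_A(f)=\orc_A(g)=(x,y)$ but $\orc_A(fg)=(x^2,y^2)$, so even the equality $\orc_A(f)\orc_A(g)=\orc_A(fg)$ fails, and any Dedekind--Mertens argument must produce the correct exponent rather than rely on a monomial shortcut.

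There is a second, smaller gap: even granting that $A\to A'$ and $A'\to B$ are each semicontent, you have not argued that the composite is semicontent. Your transitivity formula for contents in a tower of free extensions is correct, but the semicontent cancellation property does not obviously pass through a composition without further work; you should either cite a composition result or prove it directly in this setting.
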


To determine the content of an element $f \in L[x_1, \ldots, x_m]$ with respect to $R=K[x_1, \ldots, x_m]$, where $L/K$ is \emph{any} field extension, fix a vector space basis $\mathcal B$ for $L$ over $K$; then represent each coefficient of a monomial in $f$ as a $K$-linear combination of the elements of $\mathcal B$.  Collecting terms, you then get an $R$-linear combination of elements of $\mathcal B$.  Collect together the corresponding coefficients and see what ideal of $R$ they generate.  This is then the content of $f$ with respect to $R$.  

\begin{cor}
Let $(A,\m)$ be a local integral domain containing a field $K$, such that $A$ is not a valuation ring, and let $L/K$ be a nontrivial field extension.  Then $B = A \otimes_K L$ is not Gaussian over $A$.  In fact, for any minimally 2-generated ideal $I$ in $A$, there is some non-Gaussian $f\in B$ whose content over $A$ is $I$.
\end{cor}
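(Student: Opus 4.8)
The plan is to reduce the statement to Proposition~\ref{pr:nonGauss}. The first task is to check that $B = A \otimes_K L$ is free of rank $>1$ as an $A$-module. Fixing a $K$-vector space basis $\{e_\lambda\}_{\lambda \in \Lambda}$ of $L$, so that $L = \bigoplus_{\lambda \in \Lambda} K e_\lambda$, and commuting the tensor product past the direct sum gives
\[
B = A \otimes_K L \cong \bigoplus_{\lambda \in \Lambda} (A \otimes_K K) e_\lambda \cong \bigoplus_{\lambda \in \Lambda} A\, e_\lambda,
\]
so that $B$ is $A$-free on $\{e_\lambda\}$. Since $L/K$ is nontrivial, $|\Lambda| = \dim_K L \geq 2$, and hence $B$ has rank $>1$. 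This is exactly the basis used to compute content in the recipe following Proposition~\ref{pr:fieldext}, so the content of an element written in terms of the $e_\lambda$ is just the ideal generated by its coefficients, as in \cite[Corollary 1.4]{OhmRu-content}.

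The second task is to produce a minimally $2$-generated ideal of $A$ from the hypothesis that $A$ is not a valuation ring. Here I would use the standard fact that, for a local integral domain, being a valuation ring is equivalent to every $2$-generated ideal being principal. The forward direction is immediate, since the ideals of a valuation ring are totally ordered by inclusion. For the converse, if every $2$-generated ideal of the local domain $(A,\m)$ is principal, then given $a,b \in A$ one writes $(a,b)=(c)$, expresses $c = \alpha a + \beta b$ together with $a = cx$ and $b = cy$, and uses that $A$ is a domain to cancel $c$ and obtain $\alpha x + \beta y = 1$; localness then forces one of $x,y$ to be a unit, which yields $a \mid b$ or $b \mid a$, so $A$ is a valuation ring. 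Consequently, since $A$ is not a valuation ring, there exists a minimally $2$-generated ideal $I=(a,b)$.

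With these in hand, the ``In fact'' clause is immediate: applying Proposition~\ref{pr:nonGauss} with $R=A$, $S=B$, and the minimally $2$-generated ideal $I$ produces a non-Gaussian element $f \in B$ with $\orc(f)=I$, the content being taken with respect to the basis above. The first assertion then follows at once, since the existence of even one non-Gaussian element shows that $B$ is not a Gaussian $A$-algebra. The only genuinely nonroutine point is the characterization of valuation rings used in the second paragraph; everything else is direct bookkeeping with the free-module structure together with an appeal to Proposition~\ref{pr:nonGauss}.
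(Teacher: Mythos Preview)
Your proof is correct and follows essentially the same route as the paper, which simply cites Theorem~\ref{thm:Pr} and Proposition~\ref{pr:nonGauss}: you verify the freeness hypothesis explicitly and unpack the valuation-ring characterization that underlies Theorem~\ref{thm:Pr}, then apply Proposition~\ref{pr:nonGauss} directly. The only cosmetic difference is that the paper invokes Theorem~\ref{thm:Pr} for the first assertion, whereas you derive it from the ``In fact'' clause; since Theorem~\ref{thm:Pr}'s proof is itself just Proposition~\ref{pr:nonGauss} plus the same valuation-ring fact, these amount to the same argument.
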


\begin{proof}
This follows from Theorem~\ref{thm:Pr} and Proposition~\ref{pr:nonGauss}.
\end{proof}

On the other hand, we have the following 
result, which can be seen as a characterization of Gorensteinness among finitely generated Artinian $K$-algebras:

\begin{cor}\label{cor:fieldextGor}
Let $A=K[x_1,\ldots, x_m]$, let $J$ be a finite colength ideal of $A$, let $L/K$ be a nontrivial separable field extension such that $K$ is algebraically closed in $L$, and let $B := L[x_1, \ldots, x_n]$ for some $n\geq m$. Setting $R := A/J$ and $S := B/JB$, the following are equivalent: \begin{enumerate}
\item[(a)] For any $f\in S$ that is Gaussian over $R$, $\orc_{RS}(f)$ is locally principal.
\item[(b)] $R$ is Gorenstein.
\end{enumerate}
\end{cor}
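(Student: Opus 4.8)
The plan is to treat the two implications separately, after first recording the key structural input: that $S$ is a module-free semicontent $R$-algebra. Since $B = L[x_1, \ldots, x_n]$ is a module-free semicontent $A$-algebra by Proposition~\ref{pr:fieldext}, and $S = B/JB = B \otimes_A R$, I would identify $S$ with the polynomial ring $(R \otimes_K L)[x_{m+1}, \ldots, x_n]$ over $R \otimes_K L$, and deduce the freeness, faithful flatness, and Ohm-Rush properties from freeness together with Proposition~\ref{pr:omnibus}\ref{it:modout}. Establishing that the full semicontent property survives the base change along the surjection $A \onto R$ (equivalently, the semicontent property of $R \otimes_K L$ over the artinian ring $R$) is the part I expect to be most delicate, and is where the hypotheses that $L/K$ is separable with $K$ algebraically closed in $L$, and that $J$ has finite colength, get used; weak content alone does not suffice for the converse direction, so this descent must be genuine.

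For (b) $\Rightarrow$ (a): here $R = A/J$ is artinian and, being Gorenstein, is locally approximately Gorenstein, since Gorenstein rings are approximately Gorenstein. For this implication I would only need the weaker fact that $S$ is a faithfully flat Ohm-Rush $R$-algebra in which every maximal ideal of $R$ extends to a prime of $S$: faithful flatness and the Ohm-Rush property come from freeness and Proposition~\ref{pr:omnibus}\ref{it:modout}, while the prime-extension property follows because $B$ is weak content over $A$, so $\m' B$ is prime in $B$ for every maximal ideal $\m' \supseteq J$ of $A$ (it is proper by faithful flatness), and hence $\m' B/JB = \m S$ is prime in $S$. Theorem~\ref{thm:appGor} then applies and yields (a) directly.

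For (a) $\Rightarrow$ (b): I would argue the contrapositive. If $R$ is not Gorenstein, then since $R$ is artinian it decomposes as $R = R_0 \times R'$ with $R_0$ a local artinian non-Gorenstein factor, and correspondingly $S = S_0 \times S'$ with $S_0$ semicontent over $R_0$. The nontriviality of $L/K$ provides some $\theta \in L \setminus K$, so $\{1, \theta\}$ extends to a $K$-basis of $L$; given any two-generated ideal $(\bar a, \bar b)$ of $R_0$, lifting $a, b$ to $A$ and forming $a \cdot 1 + b \cdot \theta \in L[x_1, \ldots, x_m] \subseteq B$ produces, via \cite[Corollary 1.4]{OhmRu-content} and Proposition~\ref{pr:omnibus}\ref{it:modout}, an element of $S_0$ whose content is exactly $(\bar a, \bar b)$. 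Thus every two-generated ideal of $R_0$ is a content, and Proposition~\ref{pr:nongor} produces a Gaussian element $h \in S_0$ whose content is not principal. Extending $h$ by zero to $f = (h, 0) \in S_0 \times S' = S$ keeps $f$ Gaussian over $R$ (each component is Gaussian, the zero component trivially so), while $\orc(f)$ restricts to the non-principal $\orc_{R_0}(h)$ on the factor $R_0$, so $\orc(f)$ is not locally principal, contradicting (a).

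The main obstacle is the semicontent descent in the first paragraph; once that is in hand, both implications are short applications of Theorem~\ref{thm:appGor} and Proposition~\ref{pr:nongor}. I would also verify the compatibility of $\orc$ with the product decomposition $R = R_0 \times R'$, so that Gaussianness and local principality are detected factor by factor, which should be routine from Proposition~\ref{pr:omnibus}\ref{it:loc} applied to the idempotents.
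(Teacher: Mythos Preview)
Your approach matches the paper's, which is terse: ``(b) $\Rightarrow$ (a) by Theorem~\ref{thm:appGor}, whereas (a) $\Rightarrow$ (b) by Proposition~\ref{pr:nonGauss} and the contrapositive to Proposition~\ref{pr:nongor}.'' Your version is more explicit on two points the paper leaves implicit: the product decomposition of the artinian ring $R$ into local pieces, and the construction (via $a\cdot 1 + b\cdot\theta$) of elements realizing any two-generated ideal as a content---the latter being exactly what the reference to Proposition~\ref{pr:nonGauss} supplies (only the construction $f=as+bt$ from its proof is used, not its integral-domain conclusion).

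You are right that the semicontent hypothesis in Proposition~\ref{pr:nongor} is the point requiring care, but you do not need to establish descent of the full semicontent property along $A \twoheadrightarrow R$. Once you have localized to $(R_0,\m_0)$ artinian local with $S_0$ faithfully flat Ohm-Rush and $\m_0 S_0$ prime (which you correctly obtain from weak content), the machinery already deployed in the proof of Theorem~\ref{thm:appGor} handles it: \cite[Theorem~3.16]{nmeSh-OR2} makes $T_0 = (S_0)_{\m_0 S_0}$ a \emph{content} $R_0$-algebra, and \cite[Theorem~6.2]{OhmRu-content} identifies $\orc_{R_0 S_0}$ with $\orc_{R_0 T_0}$, so the single implication ``$\orc(g)=R_0 \Rightarrow \orc(fg)=\orc(f)$'' needed in the proof of Proposition~\ref{pr:nongor} follows. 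Thus the obstacle you flagged dissolves using results already in hand, and no separate descent argument is required.
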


\begin{proof}
(b) $\implies$ (a) by Theorem~\ref{thm:appGor}, whereas (a) $\implies$ (b) by Proposition~\ref{pr:nonGauss} and the contrapositive to Proposition~\ref{pr:nongor}.
\end{proof}

Along the same vein, we have the following:
\begin{cor}\label{cor:fieldextGauss}
Let $R$ be a Noetherian $K$-algebra, where $K$ is a field, and let $L/K$ be a separable field extension such that $K$ is algebraically closed in $L$.  Let $S := R \otimes_K L$, and let $f\in S$ be a regular element.  Then $f$ is Gaussian over $R$ iff $\orc_{RS}(f)$ is locally principal.
\end{cor}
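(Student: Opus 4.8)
The plan is to realize $S = R \otimes_K L$ as a free $R$-algebra and to feed it into Corollary~\ref{cor:localNoeth} (equivalently, into Theorems~\ref{thm:invGauss} and \ref{thm:localNoeth}), so that essentially all of the work is in checking hypotheses. First I would fix a $K$-vector space basis $\mathcal B$ of $L$; then $\{1 \otimes b : b \in \mathcal B\}$ is an $R$-basis of $S$, so $S$ is free, hence a faithfully flat Ohm-Rush $R$-algebra, with $\orc_{RS}(f)$ equal to the ideal generated by the coordinates of $f$ in this basis (this is precisely the content recipe described after Proposition~\ref{pr:fieldext}). Thus ``$R$ Noetherian'' and ``$S$ free over $R$'' dispatch the cheap hypotheses of Theorem~\ref{thm:localNoeth}.

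Two genuine inputs remain. For the implication ``locally principal $\Rightarrow$ Gaussian'' I want $S$ itself to be \emph{semicontent} over $R$ (so that Theorem~\ref{thm:invGauss} applies); for the converse I want $S \otimes_R \widehat{R_\m}$ to be semicontent over $\widehat{R_\m}$ for every maximal ideal $\m$ (the last hypothesis of Theorem~\ref{thm:localNoeth}). I would handle both at once via the identification
\[
S \otimes_R \widehat{R_\m} = (R \otimes_K L) \otimes_R \widehat{R_\m} \cong \widehat{R_\m} \otimes_K L,
\]
under which $\{1 \otimes b\}$ remains a basis. Since the composite $K \to R \to R_\m \to \widehat{R_\m}$ makes $\widehat{R_\m}$ a (complete local) Noetherian $K$-algebra, both desired facts are the single assertion: for a Noetherian $K$-algebra $A$ and $L/K$ as in the hypotheses, $A \otimes_K L$ is a semicontent $A$-algebra (apply it with $A = R$ and with $A = \widehat{R_\m}$).

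The main obstacle is exactly this assertion, because it lies strictly beyond Proposition~\ref{pr:fieldext}: that proposition gives the semicontent property only when the base is a polynomial ring, and for a general $K$-algebra base it yields merely the \emph{weak} content property. The way I would close the gap is to invoke the semicontent companion to Proposition~\ref{pr:fieldext}, namely \cite[Proposition 3.11]{nmeSh-OR2}, whose hypotheses are precisely that $L/K$ be separable with $K$ algebraically closed in $L$. The one point demanding care is that it must be applied to an arbitrary Noetherian $K$-algebra, in particular to the non-finitely-generated complete local ring $\widehat{R_\m}$, so I would verify that the cited result is stated at that level of generality (and, failing that, reprove it, the content being that separability forces the fibers $\kappa(\p) \otimes_K L$ to be well behaved enough to upgrade weak content to semicontent).

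Granting this, the conclusion is quick. If $\orc_{RS}(f)$ is locally principal, then $f$ is Gaussian by Theorem~\ref{thm:invGauss}, using that $S$ is semicontent over $R$. Conversely, suppose the regular element $f$ is Gaussian; since $R \to \widehat{R_\m}$ is flat, $f \otimes 1$ remains regular in $S \otimes_R \widehat{R_\m}$, so Theorem~\ref{thm:localNoeth} applies and shows $\orc_{RS}(f)$ is invertible, in particular locally principal. (Alternatively the easy direction may be routed through the completion: localize at $\m$, pass to $\widehat{R_\m}$ via Proposition~\ref{pr:gausscomplete}, and apply Theorem~\ref{thm:invGauss} over $\widehat{R_\m}$.) Together these give the asserted equivalence.
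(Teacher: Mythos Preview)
Your proposal is correct and matches the paper's approach almost exactly: establish freeness, identify $S \otimes_R \widehat{R_\m} \cong \widehat{R_\m} \otimes_K L$, verify semicontent over the completion, and invoke Corollary~\ref{cor:localNoeth}. The only cosmetic difference is in how the semicontent step is justified---the paper appeals to ``the argument in the proof of \cite[Proposition~3.8]{nmeSh-OR2}'' rather than to Proposition~3.11---and you have correctly flagged that this step must be checked at the level of an arbitrary Noetherian $K$-algebra (in particular $\widehat{R_\m}$).
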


\begin{proof}
It is clear that $S$ is free as an $R$-module.  Moreover, for any maximal ideal $\m$ of $R$, $\widehat{R_\m} \otimes_R S \cong \widehat{R_\m} \otimes_K L$, which by the argument in the proof of \cite[Proposition 3.8]{nmeSh-OR2} is a semicontent $\widehat{R_\m}$-algebra.  Then Corollary~\ref{cor:localNoeth} finishes the proof. 
\end{proof}

\section*{Acknowledgments}
Thanks to Peyman Nasehpour for pointing out a simplification in \S\ref{sec:Gausslp}.
 
\providecommand{\bysame}{\leavevmode\hbox to3em{\hrulefill}\thinspace}
\providecommand{\MR}{\relax\ifhmode\unskip\space\fi MR }
\providecommand{\MRhref}[2]{%
  \href{http://www.ams.org/mathscinet-getitem?mr=#1}{#2}
}
\providecommand{\href}[2]{#2}


\begin{thebibliography}{McC42}

\bibitem[BH97]{BH}
Winfried Bruns and J{\"u}rgen Herzog, \emph{{Cohen}-{Macaulay} rings}, revised
  ed., Cambridge Studies in Advanced Mathematics, no.~39, Cambridge Univ.
  Press, Cambridge, 1997.

\bibitem[EJ00]{EnJe-relhombook}
Edgar~E. Enochs and Overtoun M.~G. Jenda, \emph{Relative homological algebra},
  De Gruyter Expositions in Mathematics, vol.~30, Walter de Gruyter \& Co.,
  Berlin, 2000.

\bibitem[ES16a]{nmeSh-DMpower}
Neil Epstein and Jay Shapiro, \emph{A {D}edekind-{M}ertens theorem for power
  series rings}, Proc. Amer. Math. Soc. \textbf{144} (2016), no.~3, 917--924.

\bibitem[ES16b]{nmeSh-OR}
\bysame, \emph{The {O}hm-{R}ush content function}, J. Algebra Appl. \textbf{15}
  (2016), no.~1, 1650009, 14 pp.

\bibitem[ES17]{nmeSh-OR2}
\bysame, \emph{The {O}hm-{R}ush content function {II}. {N}oetherian rings,
  valuation domains, and base change}, arXiv:1703.02114 [math.AC], submitted,
  2017.

\bibitem[Gre66]{Gr-fracinv}
Sivio Greco, \emph{Sugli ideali frazionali invertibili}, Rend. Circ. Mat.
  Palermo (2) \textbf{36} (1966), no.~2, 315--333.

\bibitem[HH97]{HeiHu-Gauss}
William Heinzer and Craig Huneke, \emph{Gaussian polynomials and content
  ideals}, Proc. Amer. Math. Soc. \textbf{125} (1997), no.~3, 739--745.

\bibitem[Hoc77]{Ho-purity}
Melvin Hochster, \emph{Cyclic purity versus purity in excellent {N}oetherian
  rings}, Trans. Amer. Math. Soc. \textbf{231} (1977), no.~2, 463--488.

\bibitem[Kap70]{Kap-CR}
Irving Kaplansky, \emph{Commutative rings}, Allyn and Bacon Inc., Boston, 1970.

\bibitem[Luc05]{Lu-Gaussinv}
Thomas~G. Lucas, \emph{Gaussian polynomials and invertibility}, Proc. Amer.
  Math. Soc. \textbf{133} (2005), 1881--1886.

\bibitem[Luc08]{Lu-Gaussrp}
\bysame, \emph{The {G}aussian property for rings and polynomials}, Houston J.
  Math. \textbf{34} (2008), no.~1, 1--18.

\bibitem[Mat86]{Mats}
Hideyuki Matsumura, \emph{Commutative ring theory}, Cambridge Studies in
  Advanced Mathematics, no.~8, Cambridge Univ. Press, Cambridge, 1986,
  Translated from the {Japanese} by {M.} {Reid}.

\bibitem[McC42]{Mcc-divzero}
Neal~H. McCoy, \emph{Remarks on divisors of zero}, Amer. Math. Monthly
  \textbf{49} (1942), no.~5, 286--295.

\bibitem[Nas10]{Nas-zdcontent}
Peyman Nasehpour, \emph{Zero-divisors of content algebras}, Arch. Math. (Brno)
  \textbf{46} (2010), no.~4, 237--249.

\bibitem[Nas16]{Nas-ABconj}
\bysame, \emph{On the {A}nderson-{B}adawi $\omega_{R[X]}({I}[{X}]) =
  \omega_{R}({I})$ conjecture}, Arch. Math. (Brno) \textbf{52} (2016), no.~2,
  71--78.

\bibitem[Nor59]{No-content}
Douglas~G. Northcott, \emph{A generalization of a theorem on the content of
  polynomials}, Proc. Cambridge Philos. Soc. \textbf{35} (1959), 282--288.

\bibitem[OR72]{OhmRu-content}
Jack Ohm and David~E. Rush, \emph{Content modules and algebras}, Math. Scand.
  \textbf{31} (1972), 49--68.

\bibitem[Rus78]{Ru-content}
David~E. Rush, \emph{Content algebras}, Canad. Math. Bull. \textbf{21} (1978),
  no.~3, 329--334.

\bibitem[Tsa65]{Ts-Gauss}
Hwa Tsang, \emph{Gauss's lemma}, Ph.D. thesis, University of Chicago, 1965.

\end{thebibliography}
\end{document}